\newtheorem{theorem}{Theorem}[section]
\newtheorem{lemma}[theorem]{Lemma}
\newtheorem{prop}[theorem]{Proposition}
\theoremstyle{definition}
\newtheorem{definition}{Definition}[section]
\newtheorem{example}{Example}
\newtheorem{corollary}{Corollary}
\theoremstyle{remark}
\numberwithin{equation}{section}
\newcommand{\Hall} {{\mathbf H_{\mathcal{A}}}}
\begin{document}
\doublespacing



\newcommand{\thesisTitle}{primitive elements in the hall algebra of a cyclic quiver}
\newcommand{\yourName}{Renda Ma}
\newcommand{\yourDept}{Mathematics}
\newcommand{\yourDegree}{Doctor of Philosophy}
\newcommand{\yourMonth}{December}
\newcommand{\yourYear}{2022}


\begin{titlepage}
\begin{center}

\begin{doublespacing}

\vspace*{0.5in}
\vfill
\textbf{\MakeUppercase{\thesisTitle}}\\
\vspace{3\baselineskip}
by\\
\vspace{3\baselineskip}
\yourName\\
\yourMonth{} \yourYear{}\\
\vspace{2\baselineskip}
A dissertation submitted to the \\ Faculty of the Graduate School of \\ the University at Buffalo, The State University of New York \\ in partial fulfilment of the requirements for the \\ degree of \\
\vspace{2\baselineskip}
\yourDegree\\
\vspace{3\baselineskip}
Department of \yourDept
\end{doublespacing}

\end{center}
\end{titlepage}
\currentpdfbookmark{Title Page}{TitlePage}

\pagenumbering{roman}
\setcounter{page}{2}
\begin{center}
\null
\vfill
\begin{doublespace}
Copyright by \\ 
Renda Ma \\ 
2022 \\
\end{doublespace}
\end{center}

\addcontentsline{toc}{chapter}{Acknowledgments}
\chapter*{Acknowledgments}


I would like to thank Professor Yiqiang Li for his patience, guidance and support towards me for my studies at University at Buffalo.  I would also like to thank Professor Joseph Hundley and Professor David Hemmer for their comments and help on my thesis.

\renewcommand{\cftchapdotsep}{\cftdotsep}
\renewcommand\contentsname{Table of Contents}
\begin{singlespace}
\tableofcontents
\end{singlespace}
\currentpdfbookmark{Table of Contents}{TOC}
\clearpage

\addcontentsline{toc}{chapter}{Abstract}
\chapter*{Abstract}


We provide an explicit formula for primitive elements in the Hall algebras of nilpotent representations of cyclic quivers.


\clearpage
\pagenumbering{arabic}
\setcounter{page}{1} 

\chapter{Introduction}
To a finitary category, Ringel attached an associative algebra which is commonly called the Ringel-Hall algebra or just the Hall algebra. The multiplication of the Hall algebra is related to the number of short exact sequences with fixed isomorphism classes. Later, Ringel showed that the structure constants are specializations of certain polynomials, called the Hall polynomials when the finitary category is the category of representations of a Dynkin quiver over a finite field. Further, J.A.Green proved that there is a topological coproduct for the Ringel-Hall algebra associated with any finitary category, and that, this coproduct gives the structure of a bialgebra in the case of the category of representation of a quiver \cite{Green}. Xiao discovered an antipode \cite{Xiao} which provides the Hall algebra with a twisted Hopf algebra structure. The twisting is related with the coproduct. One interesting property of the Hall algebra provided by Ringel is that for Dynkin quivers, there is an isomorphism between the Borel part of the quantum universal enveloping algebra and the extended Hall algebra of the category of representations of the corresponding quiver. Further, it is shown that for a general quiver without loops, the positive part of the quantum universal enveloping algebra is isomorphic to the composition algebra of the Hall algebra of the corresponding quiver. Moreover, this isomorphism is a twisted Hopf algebra isomorphism. By the Milnor-Moore theorem, any Hopf algebra generated by primitive elements is the universal enveloping algebra of the Lie algebra structure on these elements. Thus, it is an interesting problem to describe primitive elements of the Hall algebra.\par 
The Hall algebra of a cyclic quiver has been intensively studied since its incarnation. When the cyclic quiver is the Jordan quiver, the study of this Hall algebra goes back to Steinitz, Hall and Macdonald. Their work is the inspiration of Ringel's work. The Hall algebra of the Jordan quiver is a genuine Hopf algebra instead of a twisted one. Steinitz, Hall and Macdonald\cite{Macdonald} showed that there is an isomorphism between the Hall algebra of nilpotent representations of the Jordan quiver and ring of symmetric functions over the same field. Based on this result, Macdonald showed that every primitive element in the Hall algebra of the category of nilpotent representations of a Jordan quiver has the formula \begin{align*}
  \sum_{\lambda\vdash n}(1-q)...(1-q^{l(\lambda)-1})I_\lambda  
\end{align*} where $\lambda$ is a partition of $n$, $l(\lambda)$ is the length of $\lambda$, $I_n$ is the $n-dimensional$ indecomposable representation of the Jordan quiver, and $I_{\lambda} = I_{\lambda_1}\oplus...\oplus I_{\lambda_n}$. It was shown by Ringel, Hubery and Schiffmann that the Hall algebra of the category of nilpotent representations of a cyclic quiver is the tensor product of the center with its composition algebra. Hubery and Schiffmann extended Steinitz, Hall and Macdonald result, proving that there is an isomorphism between the ring of symmetric functions and the center of the Hall algebra of nilpotent representations of a cyclic quiver over a finite field \cite{Hubery}. Hubery asked a question about what primitive elements are like in the Hall algebra of nilpotent representations of a cyclic quiver\cite{Hubery}. In this paper, we answer this question, showing that primitive elements $p_n$ for case $\hat{A}_N$ have the formula as in Theorem \ref{theorem:6.3.2}
  \begin{align}
        p_n = \frac{n}{1-q^{-Nn}}\sum_{\lambda\vdash n}(-1)^{l(\lambda)+1}\frac{1}{l(\lambda)}{l(\lambda)\choose m_1(\lambda),...,m_n(\lambda)}z_{\lambda}
  \end{align} 
In the process of proving the above result, we made use of the following ingredients. The first one is the isomorphism between the Hall algebra of the Jordan quiver and the ring of symmetric functions. The second one is the recursive formula for primitive elements of the ring of symmetric functions in a base denoted as $c_\lambda(x,t)$. We are somewhat surprised to see that the formula in Prop \ref{prop:3.1.3}
\begin{align*}
    p_n = \frac{n}{1-t^{n}}\sum_{\lambda\vdash n}(-1)^{l(\lambda)+1}\frac{1}{l(\lambda)}{l(\lambda)\choose m_1(\lambda),...,m_n(\lambda)}c_{\lambda}(X;t)
\end{align*} 
is neglected in the literature. The third ingredient is the isomorphism between the ring of symmetric functions and center of Hall algebra of cyclic quiver. We can use this isomorphism to deduce result (3.1.7). This result can provide us some applications. For example, by comparing the two equations of primitive elements (3.1.7), (6.2.2) in the case of the Jordan quiver, we get an identity in proposition 6.1 which is related to the Hall number for a special case.\par 
In section 2 we will recall some basic information about the ring of symmetric functions. Special symmetric functions called cyclic symmetric functions $c_n(x)$ are introduced. These are mapped to the generators of the center of the Hall algebra. In section 3, we present the formula for primitive elements in the ring of symmetric functions in terms of the basis of cyclic symmetric functions in Proposition 3.1.3. In section 4, Some basic information about representations of quiver are introduced. In section 5, our paper will present the main theorem for the case of a cyclic quiver and in Propostion 6.3 give a detailed description of the primitive elements of the Hall algebra over nilpotent representations of a cyclic quiver. We will give some examples of applications of our formula in section 7.

\chapter{Ring of symmetric functions}
\section{Introduction}
Here we give a brief review of the ring of symmetric functions. The main reference of this chapter is \cite{Macdonald}.
\section{Ring of Symmetric functions}
Let $\mathbb{Q}[X_1,X_2,...,X_n]$ be the polynomial ring with $n$ independent variables $X_1,...,X_n$. The symmetric group $S_n$ acts on this ring by permutating variables $\sigma(X_i) = X_{\sigma(i)}$ where $\sigma$ is in $S_n, 1 \leq i\leq n$. The symmetric polynomial ring $\mathbb{Q}[X_1,X_2,...,X_n]^{S_n}$ is the fixed point locus in $\mathbb{Q}[X_1,X_2,...,X_n]$ under this action. The symmetric polynomials form a subring $$\Lambda_n = \mathbb{Q}[X_1,X_2,...,X_n]^{S_n}$$ Moreover $\Lambda_n$ is a graded ring, we have $$\Lambda_n = \bigoplus_{k\ge 0}\Lambda_n^{k}$$ where $\Lambda_n^k$ is the homogeneous symmetric polynomials of degree $k$. For $m\ge n$, consider the homomorphism $\mathbb{Q}[X_1,X_2,...,X_m] \to \mathbb{Q}[X_1,X_2,...,X_n]$ by sending $X_{n+1},...,X_m$ to 0, and each other $X_i$ to itself. This homomorphism is compatible with the actions of $S_n$ and $S_m$. Restricting this homomorphism to $\Lambda_n$, gives a homomorphism $\rho_{m,n}:\Lambda_m \to \Lambda_n$. Further restriction to $\Lambda_n^k$, we have a homomorphism $\rho_{m,n}^k:\Lambda_m^k \to \Lambda_n^k$. We now form an inverse limit $\Lambda^k = \varprojlim\Lambda_n^k$ relative to the homomorphisms $\rho_{m,n}^k$. Now let $\Lambda =\bigoplus_{k\ge 0}\Lambda^k$. We call $\Lambda$ the ring of symmetric functions. Let $\{p_n(X)\}$ represent power sum symmetric functions, that is $p_n(X)=\sum_iX_i^n$. Then $\Lambda = \mathbb{Q}[p_1,p_2,...]$ admits a Hopf algebra structure by setting 
$$\Delta(p_n(X)) = p_n(X) \otimes 1 + 1\otimes p_n(X), \quad \epsilon(p_n(X)) = 0, \quad S(p_n) = -p_n(X).$$ 
By definition, we see that $p_n$ are primitive elements of $\Lambda$. Because $p_n(X)$ are also generators of $\Lambda$, so all primitive elements of $\Lambda$ are $p_n(X)$.

\section{Cyclic Symmetric Functions}

  We now explain an important class of symmetric functions called cyclic symmetric functions, which are denoted by $c_n(X;t)$. They will be used later.
  Denote the extension $\Lambda\otimes_{\mathbb{Q}}\mathbb{Q}[t]$ by $\Lambda[t]$. Along this extension, there is a generalization of symmetric functions in $\Lambda$. In $\Lambda$, the complete symmetric functions $h_n(X)$ is defined as $$h_n(X) = \sum_{i_1\leq ...\leq i_n}X_{i_1}...X_{i_n}$$ 
  The generating function for $h_n(X)$ is defined as,
          $$H(T) = 1+ \sum_{n\ge 1}h_n(X)T^n = \prod_{i\ge1}(1-X_iT)^{-1}$$
  The relation between $H(T)$ and $p_n$ is shown in \cite{Macdonald}. Let $exp$ denote exponential function, we have
                $$H(T) = \exp{\sum_{n\geq1}\frac{p_n}{n}T^n}$$

  By following \cite{Hubery}, define a generalization $c_n(X;t) \in \Lambda[t]$ of $h_n(X)$ in the following way: 
  \begin{align} 
    \begin{split} 
    1+ \sum_{n\ge 1} c_n(X;t)T^n = \frac{H(T)}{H(tT)} = \exp{(\sum_{n\ge1}\frac{1-t^n}{n}p_nT^n)}
   \end{split} 
  \end{align}
 
  We take the natural logarithm $\ln$ of the equation (2.3.1) to get the relation between $c_n(X;t)$ and $p_n(X)$ in the following way:
  \begin{align} 
    \begin{split}
       \sum_{n\ge1} \frac{1-t^n}{n}p_nT^n = \ln(1+ \sum_{n\ge 1} c_n(X;t)T^n)
   \end{split} 
 \end{align} 
 
  By taking derivative with respect to $T$ for the above equation, we have
  \begin{align*} 
    \begin{split} 
       \sum_{n\ge1}(1-t^n)p_nT^{n-1} & = \frac{\sum_{n\ge 1} nc_n(X;t)T^{n-1}}{1+ \sum_{n\ge1} c_n(X;t)T^n}\\ .
    \end{split}
  \end{align*}

  Multiplying the denominator on both sides of the equation, we have
  \begin{align*} 
    \begin{split} 
        \sum_{n\ge1} nc_n(X;t)T^{n-1} &= (\sum_{n\ge1}(1-t^n)p_nT^{n-1})(1+ \sum_{n\ge1} c_n(X;t)T^n)\\ &= (\sum_{n\ge1}(1-t^n)p_nT^{n-1})(c_0(X;t)T^0+ \sum_{n\ge1} c_n(X;t)T^n)\\ &=(\sum_{a\ge1}(1-t^a)p_aT^{a-1})(\sum_{b\ge0} c_b(X;t)T^b)\\
        &= \sum_{b\ge 0}\sum_{a\ge 1}(1-t^a)p_ac_b(X;t)T^{a+b-1}\\ 
   \end{split}
 \end{align*} 
 
  Let $c_0(X;t) = 1$, $a+b = n$ and matching the coefficient correspondingly, we have
 \begin{align} \label{eq:2.3.3}
     nc_n(X;t) &= \sum_{a=1}^{n}(1-t^a)p_ac_{n-a}(X;t) 
 \end{align}

\chapter{Formula of $p_n$ in ring of symmetric functions}
\section{Introduction}
 Here we provide our formula to describe primitive elements in $\Lambda[t]$. Solving the above equation $(\ref{eq:2.3.3})$ recursively, we have 
\begin{lemma} The following formula for primitive elements in $\Lambda[t]$
\begin{equation}\label{eq:3.1.1}
  \begin{split}
        p_n & = \frac{1}{1-t^{n}}\sum_{k=1}^n(-1)^{k+1}\sum_{\begin{split}i_1+...+i_k &=n\\ i_1,...,i_k &\ge 1\end{split}}i_kc_{i_k}(X;t)c_{i_{k-1}}(X;t)...c_{i_1}(X;t)\\
   \end{split}
\end{equation}
\end{lemma}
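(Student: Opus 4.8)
The plan is to read the recursion $(\ref{eq:2.3.3})$ as the coefficient form of a generating-function identity and then invert it directly. Set $D(T)=\sum_{n\ge 1}c_n(X;t)T^n$ and $C(T)=1+D(T)$. The computation carried out just before $(\ref{eq:2.3.3})$ says precisely that $C'(T)=\big(\sum_{n\ge 1}(1-t^n)p_nT^{n-1}\big)C(T)$, and since $C(0)=1$ we may divide to get $\sum_{n\ge 1}(1-t^n)p_nT^{n-1}=C'(T)/C(T)=\tfrac{d}{dT}\ln(1+D(T))$ as an identity of formal power series over $\Lambda[t]$. So the lemma is equivalent to expanding $\tfrac{d}{dT}\ln(1+D(T))$ and extracting the coefficient of $T^{n-1}$.

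First I would use $\ln(1+D(T))=\sum_{k\ge 1}\tfrac{(-1)^{k+1}}{k}D(T)^k$, which is a legitimate formal power series because $D(T)$ has zero constant term, so $D(T)^k$ is divisible by $T^k$ and each coefficient on the right is a finite sum. Differentiating term by term, $\sum_{n\ge 1}(1-t^n)p_nT^{n-1}=\sum_{k\ge 1}\tfrac{(-1)^{k+1}}{k}\tfrac{d}{dT}D(T)^k$. Expanding $D(T)^k=\sum_{i_1,\dots,i_k\ge 1}c_{i_1}(X;t)\cdots c_{i_k}(X;t)\,T^{i_1+\dots+i_k}$ and differentiating gives $\tfrac{1}{k}\tfrac{d}{dT}D(T)^k=\tfrac{1}{k}\sum_{i_1,\dots,i_k\ge 1}(i_1+\dots+i_k)\,c_{i_1}(X;t)\cdots c_{i_k}(X;t)\,T^{i_1+\dots+i_k-1}$. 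The one substantive point is the symmetrization: the summand $c_{i_1}(X;t)\cdots c_{i_k}(X;t)$ is unchanged under permuting $(i_1,\dots,i_k)$, so each index contributes equally and $\tfrac{1}{k}\sum(i_1+\dots+i_k)(\cdots)=\sum i_k(\cdots)$ over such symmetric summands. Matching coefficients of $T^{n-1}$ — only $k\le n$ contributes since every $i_j\ge 1$ — yields $(1-t^n)p_n=\sum_{k=1}^n(-1)^{k+1}\sum_{i_1+\dots+i_k=n}i_k\,c_{i_k}(X;t)\cdots c_{i_1}(X;t)$, and dividing by $1-t^n$ (working over $\Lambda\otimes\mathbb{Q}(t)$) gives $(\ref{eq:3.1.1})$.

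An alternative route, matching the phrase ``solving recursively'', is a direct induction on $n$: rewrite $(\ref{eq:2.3.3})$ as $(1-t^n)p_n=nc_n-\sum_{a=1}^{n-1}(1-t^a)p_ac_{n-a}$, substitute the inductive formula for each $(1-t^a)p_a$, and regroup the resulting products by the number $k$ of $c$-factors. I expect the main obstacle, in either route, to be the bookkeeping of the outermost coefficient: one must check that it is exactly $i_k$ (the size of the last block), not the average $(i_1+\dots+i_k)/k$ or anything in between. The generating-function route isolates this as the clean symmetrization identity above, so that is the version I would write up, using the induction only as a sanity check for small $n$.
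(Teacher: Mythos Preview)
Your proof is correct and takes a genuinely different route from the paper's. The paper proves $(\ref{eq:3.1.1})$ by induction on $n$: it rewrites the recursion $(\ref{eq:2.3.3})$ as $(1-t^n)p_n=nc_n-\sum_{a=1}^{n-1}(1-t^a)p_ac_{n-a}$, substitutes the inductive hypothesis for each $(1-t^a)p_a$, and regroups the resulting products by peeling off the innermost factor $c_{i_1}$; uniqueness then follows because the $c_n$ generate freely. Your argument instead goes back to the generating-function identity $(2.3.2)$ that the paper records but does not exploit: you expand $\ln(1+D(T))$ directly, differentiate, and use the symmetry of $c_{i_1}\cdots c_{i_k}$ under permutation of indices to replace the average $(i_1+\dots+i_k)/k$ by $i_k$. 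This is shorter and more conceptual---it explains \emph{where} the coefficient $i_k$ comes from rather than verifying it after the fact---while the paper's induction is more elementary and stays entirely within the recursion, never invoking the logarithmic series. Your ``alternative route'' paragraph is essentially a sketch of the paper's actual proof.
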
 
\begin{proof}
We will prove the lemma by induction.
 \begin{itemize}
     \item[1] When $n=1$, by equation (\ref{eq:3.1.1}) we have $p_1 = \frac{1}{1-t}c_1(X;t)$ which satisfies equation (\ref{eq:2.3.3}).
     \item[2] Suppose $p_j$ also satisfies equation (\ref{eq:2.3.3}) for $j\leq{n-1}$. 
     \item[3] Simplifying equation (\ref{eq:3.1.1}) we have,
\begin{align}
   \begin{split}
      (1-t^{n})p_n & = nc_n(X;t)-\sum_{i_1+i_2 = n}i_2c_{i_2}(X;t)c_{i_1}(X;t)+...\\ & + (-1)^{n+1}\sum_{i_1+...+i_n = n}i_nc_{i_n}(X;t)...c_{i_1}(X;t)\\
                   & = nc_n(X;t) - \sum_{i_1 = 1}^{n-1}\Big(\sum_{i_2=n-i_1}i_2c_{i_2}(X;t)+...\\ &+(-1)^n\sum_{i_n+...+i_2 =n-i_1}i_nc_{i_n}(X;t)..c_{i_2}(X;t)\Big)c_{i_1}(X;t)\\
                        & = nc_n(X;t) - \sum_{i_1 = 1}^{n-1}\sum_{k=1}^{n-i_1}(-1)^{k+1}\sum_{i_{k+1}+...+i_2 = n-i_1} i_{k+1}c_{i_{k+1}}(X;t)...c_{i_2}(X;t)c_{i_1}(X;t)\\
   \end{split}
\end{align}
Substituting $n-i_1 = a$ to the right hand of last equation as follows,  
\begin{equation}\label{eq:3.1.3}
  \begin{split}
     (1-t^{n})p_n & = nc_n(X;t) - \sum_{a=1}^{n-1}\sum_{k=1}^a(-1)^{k+1}\sum_{i_{k+1}+...+i_2 = a}i_{k+1}c_{i_{k+1}}(X;t)...c_{i_2}(X;t)c_{n-a}(X;t)\\
                        & = nc_n(X;t) - \sum_{a=1}^{n-1}\sum_{k=1}^a(-1)^{k+1}\sum_{j_k+...+j_1 = a}j_kc_{j_k}(X;t)...c_{j_1}(X;t)c_{n-a}(X;t)\\
  \end{split} 
 \end{equation}
  By assumption, $p_a$ satisfies (\ref{eq:3.1.1}), so $$p_a = \frac{1}{1-t^a}\sum_{k=1}^{a}(-1)^{k+1}\sum_{j_1+...+j_k=a}j_kc_{j_k}(X;t)...c_{j_1}(X;t)$$. Substituting $p_a$ in (\ref{eq:3.1.3}), we have,
  \begin{equation}
      \begin{split}
          (1-t^n)p_n = nc_n(X;t) - \sum_{a=1}^{n-1}(1-t^a)p_ac_{n-a}(X;t)
      \end{split}
  \end{equation}

The formula (\ref{eq:3.1.1}) satisfies equation (\ref{eq:2.3.3}) for the case of $n$. So by induction, the formula for $p_n$ satisfies equation (\ref{eq:2.3.3}) for all n. Because $\{c_n(X;t)\}$ is a basis of $\Lambda[t]$, by equation (\ref{eq:2.3.3}), $p_n$ are uniquely determined. Since the formula (\ref{eq:3.1.1}) also satisfies equation (\ref{eq:2.3.3}), so the formula (\ref{eq:3.1.1}) is a correct representation of $p_n$.
\end{itemize}
\end{proof}

We can simplify equation $(\ref{eq:3.1.1})$ further. The goal is to express the sum \begin{equation}
    \sum_{\begin{split}i_1+...+i_k &=n\\ i_1,...,i_k &\ge 1\end{split}}i_kc_{i_k}(X;t)c_{i_{k-1}}(X;t)...c_{i_1}(X;t)
\end{equation} as the tautology 
\begin{equation}
    \sum_{\begin{aligned}
        r_1+2r_2...+nr_n &=n\\r_1+...+r_n &=k  \end{aligned}}\sum_{l=1}^nlg_{r_1,...,r_n}^lc_1^{r_1}(X;t)...c_n^{r_n}(X;t)        
\end{equation}
Here $r_1,...,r_n\ge 0$. $g_{r_1,...,r_n}^l$ representing the cardinality of monomials $c_{i_k}(X;t)...c_{i_1}(X;t)$ with a turple of fixed multiplicity $m(c_1(X;t))=r_1,...,m(c_n(X;t))=r_n$. $l$ here is an index $1\le l\le n$. In other words, if $|\cdot|$ represents cardinality then,
\begin{align*}
       g_{r_1,...,r_n}^l = |\biggl\{(j_{k-1},...,j_1) \vdash n-l : |\{s\in \{1,...,k-1\}: j_s = 1\}| & = r_1,...,\\ |\{s\in \{1,...,k-1\}:j_s = l\}| & = r_l-1,..., \\ |\{s\in \{1,...,k-1\}: j_s = n\}| & = r_n\biggr\}|
\end{align*}

\begin{lemma}
    For a fixed $(r_1,...,r_n)$, we have $g_{r_1,...,r_n}^l = {r_1+...+r_n-1\choose r_1,...,r_l-1,...r_n}.$
\end{lemma}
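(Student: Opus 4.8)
The plan is to identify $g_{r_1,\dots,r_n}^l$ with the number of distinct linear arrangements of a fixed multiset, which is given by a multinomial coefficient. First I would unwind the definition: a tuple $(j_{k-1},\dots,j_1)$ contributing to $g_{r_1,\dots,r_n}^l$ is precisely an ordered sequence of length $k-1$ of positive integers from $\{1,\dots,n\}$ in which the value $l$ occurs $r_l-1$ times and each value $i\ne l$ occurs $r_i$ times, where $k=r_1+\dots+r_n$. Such a sequence can exist only when $r_l\ge 1$; if $r_l=0$ then $g_{r_1,\dots,r_n}^l=0$, and the right-hand side is likewise $0$ under the standard convention that a multinomial coefficient with a negative lower entry vanishes, so we may assume $r_l\ge 1$ from now on.

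Next I would check that the two apparent side conditions in the definition are automatically satisfied and impose nothing extra once the multiplicities are prescribed. The number of parts of any such sequence is $\big(\sum_{i\ne l}r_i\big)+(r_l-1)=\big(\sum_{i=1}^n r_i\big)-1=k-1$, which matches the index set $\{1,\dots,k-1\}$; and, using the relation $r_1+2r_2+\dots+nr_n=n$, the sum of the parts is $\big(\sum_{i\ne l} i\,r_i\big)+l(r_l-1)=\big(\sum_{i=1}^n i\,r_i\big)-l=n-l$, so the sequence is indeed a composition of $n-l$. Hence the set counted by $g_{r_1,\dots,r_n}^l$ is exactly the set of orderings in a row of the multiset that contains $r_1$ copies of $1$, \dots, $r_l-1$ copies of $l$, \dots, $r_n$ copies of $n$.

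Finally I would invoke the elementary count of multiset permutations: a multiset with multiplicities $a_1,\dots,a_m$ and total size $N=a_1+\dots+a_m$ has exactly $\binom{N}{a_1,\dots,a_m}=N!/(a_1!\cdots a_m!)$ distinct orderings. Taking $N=k-1=r_1+\dots+r_n-1$ with multiplicities $r_1,\dots,r_l-1,\dots,r_n$ gives $g_{r_1,\dots,r_n}^l=\binom{r_1+\dots+r_n-1}{r_1,\dots,r_l-1,\dots,r_n}$, as claimed. I do not expect a genuine obstacle here; the only points needing care are the bookkeeping of the previous paragraph, which shows that the composition and length conditions are forced by the relations on $(r_1,\dots,r_n)$, and the degenerate case $r_l=0$.
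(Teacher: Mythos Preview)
Your proof is correct and takes essentially the same approach as the paper: both recognize $g_{r_1,\dots,r_n}^l$ as the number of orderings of a multiset with multiplicities $r_1,\dots,r_l-1,\dots,r_n$, hence the multinomial coefficient. The paper's version is terser---it simply restates the definition and writes the multinomial as a telescoping product of binomials---whereas you spell out explicitly why the length and sum constraints are automatic and handle the degenerate case $r_l=0$; this extra care is welcome but does not change the underlying argument.
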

\begin{proof}
 By definition 
 \begin{align*}
     g_{r_1,...,r_n}^l = |\biggl\{(j_{k-1},...,j_1) \vdash n-l : |\{s\in \{1,...,k-1\}: j_s = 1\}| & = r_1,...,\\ |\{s\in \{1,...,k-1\}:j_s = l\}| & = r_l-1,..., \\ |\{s\in \{1,...,k-1\}: j_s = n\}| & = r_n\biggr\}|
 \end{align*}
 Simply we can get that 
 \begin{align*}
    g_{r_1,...,r_n}^l & ={r_1+...+r_n-1\choose r_1}{r_2+...+r_n-1\choose r_2}...{r_{n-1}+r_n\choose r_{n-1}}\\ &= {r_1+...+r_n-1\choose r_1,...,r_l-1,...,r_n}
 \end{align*}
\end{proof}
Let $r_1,...,r_n\ge 0$, if the formula $(\ref{eq:3.1.1})$

\begin{align*}
  \begin{split}      
      p_n & = \frac{1}{1-t^{n}}\sum_{k=1}^n(-1)^{k+1}\sum_{\begin{aligned}\begin{split} i_1+...+i_k &=n\\ i_1,...,i_k &\ge 1\end{split}\end{aligned}}i_kc_{i_k}(X;t)c_{i_{k-1}}(X;t)...c_{i_1}(X;t)\\
  \end{split}
\end{align*}

  being simplified as the tautology
\begin{align*}
  \begin{split}
            & = \frac{1}{1-t^{n}}\sum_{k=1}^n(-1)^{k+1}\sum_{\begin{aligned}\begin{split} 1r_1+...+nr_n & = n\\ r_1+...+r_n & = k\end{split}\end{aligned}} \sum_{l=1}^tl g_{r_1,...,r_n}^l c_1(X;t)^{r_1}...c_n(X;t)^{r_n}\\
   \end{split}
\end{align*}
Substituting formula of $g_{r_1,...,r_n}^l$ into equation above, we have
\begin{align*}
   \begin{split}
        p_n & = \frac{1}{1-t^{n}}\sum_{k=1}^n(-1)^{k+1}\sum_{\begin{aligned}\begin{split} 1r_1+...+nr_n & = n\\ r_1+...+r_n & = k \end{split}\end{aligned}} \sum_{l=1}^tl{r_1+...+r_n-1\choose r_1,...,r_l-1,...,r_n} c_1(X;t)^{r_1}...c_n(X;t)^{r_n}\\ 
            & = \frac{1}{1-t^{n}}\sum_{k=1}^n(-1)^{k+1}\sum_{\begin{aligned}\begin{split} 1r_1+...+nr_n & = n\\ r_1+...+r_n & = k\end{split}\end{aligned}} \sum_{l=1}^tlr_l{r_1+...+r_n-1\choose r_1,...,r_l,...r_n}c_1(X;t)^{r_1}...c_n(X;t)^{r_n}\\ 
            & = \frac{1}{1-t^{n}}\sum_{k=1}^n(-1)^{k+1}\sum_{\begin{aligned}\begin{split} 1r_1+...+nr_n & = n\\ r_1+...+r_n & = k \end{split}\end{aligned}} \sum_{l=1}^tlr_l\frac{1}{k}{r_1+...+r_n\choose r_1,...,r_n}c_1(X;t)^{r_1}...c_n(X;t)^{r_n}\\
            & = \frac{1}{1-t^{n}}\sum_{k=1}^n(-1)^{k+1}\frac{1}{k}\sum_{\begin{aligned}\begin{split} 1r_1+...+nr_n & = n\\ r_1+...+r_n & = k \end{split}\end{aligned}} \sum_{l=1}^tlr_l{r_1+...+r_n\choose r_1,...,r_t}c_1(X;t)^{r_1}...c_n(X;t)^{r_n}\\
    \end{split}
\end{align*}
After summing over $l$, we have
\begin{align}
  \begin{split}
     p_n & = \frac{1}{1-t^{n}}\sum_{k=1}^n(-1)^{k+1}\frac{n}{k}\sum_{\begin{aligned}\begin{split} 1r_1+...+nr_n & = n\\ r_1+...+r_n & = k \end{split}\end{aligned}} {r_1+...+r_n\choose r_1,...,r_n}c_1(X;t)^{r_1}...c_n(X;t)^{r_n}\\
         & = \frac{n}{1-t^{n}}\sum_{1r_1+...+nr_n=n}(-1)^{r_1+...+r_n+1}\frac{1}{r_1+...+r_n}{r_1+...+r_n\choose r_1,...,r_n}c_1(X;t)^{r_1}...c_n(X;t)^{r_n}\\
   \end{split}
\end{align}
So we have the following proposition.
\begin{prop} \label{prop:3.1.3}
   Let $\lambda$ be a partition of $n$ in the form that $\lambda = (1^{m_1(\lambda)},...,n^{m_n(\lambda)})$, $m_i(\lambda)$ is the multiplicity of $i$ in $\lambda$. One explicit formula for primitive elements in $\Lambda[t]$ is
\begin{align}\label{eq:3.1.8}
    \begin{split}
        p_n = \frac{n}{1-t^{n}}\sum_{\lambda\vdash n}(-1)^{l(\lambda)+1}\frac{1}{l(\lambda)}{l(\lambda)\choose m_1(\lambda),...,m_n(\lambda)}c_{\lambda}(X;t)
    \end{split}
\end{align} where $l(\lambda)$ is the length of $\lambda$, $c_{\lambda}(X;t) = c_1(X;t)^{m_1(\lambda)}...c_n(X;t)^{m_n(\lambda)}$.
\end{prop}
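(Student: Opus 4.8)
The plan is to start from formula~(\ref{eq:3.1.1}) of Lemma~3.1.1 and repackage its inner sum, which runs over ordered compositions $(i_1,\dots,i_k)$ of $n$ weighted by the last part $i_k$, into a sum indexed by partitions of $n$. Since $\Lambda[t]$ is commutative, the product $c_{i_k}(X;t)c_{i_{k-1}}(X;t)\cdots c_{i_1}(X;t)$ depends only on the multiset $\{i_1,\dots,i_k\}$, hence only on the multiplicity vector $(r_1,\dots,r_n)$ given by $r_j = |\{s : i_s = j\}|$, and equals $c_1(X;t)^{r_1}\cdots c_n(X;t)^{r_n}$. Grouping the compositions by their multiplicity vector and by the value $l$ of the distinguished part $i_k$, I would observe that the number of compositions contributing a given term $l\,c_1(X;t)^{r_1}\cdots c_n(X;t)^{r_n}$ is exactly the count $g^l_{r_1,\dots,r_n}$ defined just before Lemma~3.1.2, namely the number of orderings of the remaining $k-1$ parts, whose multiplicities are $r_1,\dots,r_l-1,\dots,r_n$. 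This rewrites~(\ref{eq:3.1.1}) in the ``tautological'' form $\frac{1}{1-t^{n}}\sum_{k=1}^{n}(-1)^{k+1}\sum_{(r)}\sum_{l=1}^{n} l\,g^l_{r_1,\dots,r_n}\,c_1(X;t)^{r_1}\cdots c_n(X;t)^{r_n}$, where $(r) = (r_1,\dots,r_n)$ ranges over nonnegative integers with $1r_1+2r_2+\cdots+nr_n = n$ and $r_1+\cdots+r_n = k$.

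Next I would substitute the closed form $g^l_{r_1,\dots,r_n} = \binom{r_1+\cdots+r_n-1}{r_1,\dots,r_l-1,\dots,r_n}$ supplied by Lemma~3.1.2 and simplify using two elementary identities. Writing $k = r_1+\cdots+r_n$, the first is $\binom{k-1}{r_1,\dots,r_l-1,\dots,r_n} = \frac{r_l}{k}\binom{k}{r_1,\dots,r_n}$, which factors the multinomial coefficient out of the sum over $l$; the second is $\sum_{l=1}^{n} l\,r_l = n$, which is nothing but the constraint $1r_1+\cdots+nr_n = n$. Applying these, the $l$-sum collapses to the scalar $n/k$, and the whole expression becomes $\frac{n}{1-t^{n}}\sum_{k=1}^{n}\frac{(-1)^{k+1}}{k}\sum_{(r):\,r_1+\cdots+r_n=k}\binom{k}{r_1,\dots,r_n}c_1(X;t)^{r_1}\cdots c_n(X;t)^{r_n}$.

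Finally I would reinterpret the index set: a vector $(r_1,\dots,r_n)$ of nonnegative integers with $1r_1+\cdots+nr_n = n$ is exactly the multiplicity data of a partition $\lambda = (1^{r_1},\dots,n^{r_n}) \vdash n$, with $r_j = m_j(\lambda)$ and $k = r_1+\cdots+r_n = l(\lambda)$, and then $c_1(X;t)^{r_1}\cdots c_n(X;t)^{r_n} = c_\lambda(X;t)$ by definition. Merging the outer sum over $k$ with the inner sum over $(r)$ into a single sum over $\lambda \vdash n$ produces~(\ref{eq:3.1.8}). The one step that deserves care is the combinatorial bookkeeping in the first paragraph --- verifying that sorting compositions by (multiplicity vector, value of the last part) is a bijection onto the stated index set, so that $g^l_{r_1,\dots,r_n}$ really is the coefficient it is claimed to be; this is precisely what Lemma~3.1.2 is built to resolve, and once it is in hand the remainder is just the two binomial identities and relabelling of summation variables. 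I expect the multinomial manipulation to be the only place where an error could creep in, everything else being commutativity in $\Lambda[t]$ and reindexing.
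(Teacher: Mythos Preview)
Your proposal is correct and follows essentially the same route as the paper: starting from Lemma~3.1.1, regrouping compositions by their multiplicity vector and the value of the last part, invoking Lemma~3.1.2 for the count $g^l_{r_1,\dots,r_n}$, then applying the identities $\binom{k-1}{r_1,\dots,r_l-1,\dots,r_n}=\frac{r_l}{k}\binom{k}{r_1,\dots,r_n}$ and $\sum_l l r_l=n$, and finally relabelling by partitions. The paper's argument is exactly this chain of manipulations, so there is nothing to add.
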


The equation $(\ref{eq:3.1.8})$ is important in our later discussion. One interesting point is that the right hand of $(\ref{eq:3.1.8})$ is in $\Lambda[t]$. We will begin introducing the Hall algebra of nilpotent representations of a cyclic quiver over a finite field. 

\chapter{Hall algebra}
\section{Introduction}
 We will now introduce the Hall algebra of a finitary category, the main reference of this section is \cite{Schiffmann}
\section{Finitary category}
Shiffmann introduced a category called finitary category in \cite{Schiffmann}, we will review it here. A category is called a small category if all of its objects form a set and all of its morphisms form a set. A small abelian category $\mathcal{A}$ is called finitary if for any two objects $M,N \in \mathcal{A}$, they satisfy the condition, 
   \begin{align*} |Hom_{\mathcal{A}}(M,N)|<\infty,\quad |Ext_{\mathcal{A}}^1(M,N)|<\infty \end{align*}
   Here $Ext_{\mathcal{A}}^n (M,N)$ is the $n-th$ cohomology group. 
   Let $\mathcal{A}$ be a finitary category, we make additional assumptions that for arbitrary objects $M,N$ in $\mathcal{A}$,  there exists a number $c_{M,N}> 0$ such that 
   \begin{align*} 
        & Ext_{\mathcal{A}}^i(M,N) = \{0\}, \text{for }i> c_{M,N} \\
        & |Ext_{\mathcal{A}}^{i}(M,N)|<\infty \text{ for all } i.
  \end{align*}
   
   For any two objects $M,N$ of $\mathcal{A}$, we put $$\langle M,N \rangle =  \left(\prod_{i=0}^{\infty}|Ext_{\mathcal{A}}^{i}(M,N)|^{(-1)^{i}}\right)^{\frac{1}{2}}$$ 
   By the assumption, the quantity $\langle M,N\rangle$ is well defined because for there exists an $c_{M,N}$ such that any $i>c_{M,N}$, $Ext_{\mathcal{A}}^i(M,N) = \{0\}$ for any objects $M,N$. $\langle, \rangle$ is called multiplicative Euler form. It is also useful to introduce the symmetric Euler form $(M,N) = \langle M,N\rangle \cdot\langle N,M\rangle$. Now we can introduce the definition of the Hall algebra of a finitary category.
   
\section{Hall algebra of a finitary category}
   Here we will introduce Hall algebra of a finitary category. The main reference is in \cite{Schiffmann}. \\
   Let $\mathcal{A}$ be a finitary category and for an object $M$ of $\mathcal{A}$, let $[M]$ denote the isomorphism class of $M$. With fixed objects $M,N,R \in \mathcal{A}$, we denote $\mathbf{P}_{M,N}^{R}$ as the cardinality of all short exact sequenes $0 \rightarrow N \rightarrow R\rightarrow M\rightarrow 0$. Set $a_M = |Aut(M)|$. The Hall algebra $\Hall$ is defined to be 
                  $$\Hall = \bigoplus_{[M]}\mathbb C[M]$$ 
   as vector space where $[M]$ runs over the set of isoclasses of objects in $\mathcal{A}$ with multiplication given by 
                 $$[M]\cdot [N] = \langle M, N\rangle \sum_R\frac{1}{a_Ma_N}\mathbf{P}_{M,N}^{R}[R]$$ 
   The summation in above definition is well-defined because $|Ext^1(M,N)|<\infty$, so there are only finitely many $R$. It is known that $\Hall$ is an associative algebra with unit $i: \mathbb{C} \rightarrow \Hall$ given by $i(c) = c[0]$, where $0$ is the zero object of $\mathcal{A}$.

 \subsection{Comultiplication on $\Hall$}
    The main reference in this section is from \cite{Schiffmann} and \cite{Green}. Paraphrasing what Schiffmann said in \cite{Schiffmann}, we can say that the multiplication in $\Hall$ encodes all the ways of forming short exact sequences with object $M$ on top of object $N$. It would be interesting to have an operation which is breaking an object $R$ into two objects. By the assumption of $\mathcal{A}$, there are only finitely many extensions of two objects $M,N$. But for a fixed object $R$, there are possible infinitely many pairs $(M,N)$ such that short exact sequence $0 \rightarrow N \rightarrow R\rightarrow M\rightarrow 0$ exist. So we need to consider certain completion of $\Hall$ and $\Hall\otimes\Hall$.
    Let $K(\mathcal{A})$ denote the Grothendieck group of $\mathcal{A}$. For $\alpha \in K(\mathcal{A})$, $\Hall[\alpha] = \bigoplus_{\bar{M} =\alpha}\mathbb{C}[M]$. Here $\Bar{M}$ stands for the class of $M$ in Grothendieck group. Set
    $$\Hall[\alpha]\widehat{\otimes}\Hall[\beta] = \prod_{[M]\in \alpha, [N]\in \beta}\mathbb C[M]\otimes \mathbb C[N]$$ $$\Hall\widehat{\otimes}\Hall = \prod_{\alpha, \beta}\Hall[\alpha] \widehat{\otimes}\Hall[\beta]$$ 
    In other words, $\Hall \widehat{\otimes}\Hall$ is the space of all formal (infinite) linear combinations $\sum_{M,N}c_{M,N}[M]\otimes[N], c_{M,N}\in \mathbb{C}.$

   \begin{prop}\cite{Green}
        The following defines on $\Hall$ the structure of a topological coassociative coproduct: for any object $R\in \Hall$, $$ \Delta([R]) = \sum_{M,N}\langle M,N\rangle \frac{\mathbf{P}_{M,N}^{R}}{a_R} [M]\otimes[N] $$ with counit $\epsilon: \Hall \rightarrow \mathbb C$ defined by $\epsilon([M]) = \delta_{M,0}$.
   \end{prop}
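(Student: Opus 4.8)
The plan is to verify, in order, that the stated formula lands in the completion $\Hall\widehat{\otimes}\Hall$ so that its iterates make sense, that $\epsilon$ is a two-sided counit, and that $\Delta$ is coassociative, the last of these carrying essentially all of the content. Well-definedness is immediate: if $\mathbf{P}_{M,N}^{R}\neq 0$ then there is an exact sequence $0\to N\to R\to M\to 0$, so $\bar M+\bar N=\bar R$ in $K(\mathcal A)$; hence for fixed $R$ the pairs $(M,N)$ appearing in $\Delta([R])$ lie in the finitely many components $\Hall[\alpha]\widehat{\otimes}\Hall[\beta]$ with $\alpha+\beta=\bar R$, each such component contributes only finitely many isoclasses, and each coefficient $\langle M,N\rangle\mathbf{P}_{M,N}^{R}/a_R$ is a finite scalar by finiteness of $\mathrm{Hom}$ and $\mathrm{Ext}^{1}$. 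Thus $\Delta([R])\in\Hall\widehat{\otimes}\Hall$, and applying $\Delta\otimes\mathrm{id}$ or $\mathrm{id}\otimes\Delta$ to it produces, in each fixed Grothendieck degree, a finite sum, so both iterates are well-defined elements of $\Hall\widehat{\otimes}\Hall\widehat{\otimes}\Hall$.

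For the counit, apply $\epsilon\otimes\mathrm{id}$ to $\Delta([R])$: every term with $M\not\cong 0$ dies, leaving $\sum_{N}\langle 0,N\rangle\,a_R^{-1}\mathbf{P}_{0,N}^{R}[N]$. Now $\langle 0,N\rangle=1$, and $\mathbf{P}_{0,N}^{R}$ counts exact sequences $0\to N\to R\to 0\to 0$, i.e.\ isomorphisms $N\xrightarrow{\sim}R$; this set is empty unless $N\cong R$, in which case it is a torsor under $\mathrm{Aut}(R)$ and so has cardinality $a_R$. Hence $(\epsilon\otimes\mathrm{id})\Delta([R])=[R]$, and the computation of $(\mathrm{id}\otimes\epsilon)\Delta([R])$ is symmetric, using $\mathbf{P}_{M,0}^{R}=a_R\,\delta_{[M],[R]}$.

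For coassociativity I would expand both iterates and match the coefficient of a fixed $[A]\otimes[B]\otimes[N]$; after substituting $\Delta$ into the appropriate tensor factor and relabelling, this becomes the identity
\[
\sum_{M}\langle M,N\rangle\langle A,B\rangle\,\frac{\mathbf{P}_{M,N}^{R}\mathbf{P}_{A,B}^{M}}{a_R\,a_M}=\sum_{C}\langle A,C\rangle\langle B,N\rangle\,\frac{\mathbf{P}_{A,C}^{R}\mathbf{P}_{B,N}^{C}}{a_R\,a_C},
\]
the two sums being finite since $\bar M=\bar A+\bar B$ and $\bar C=\bar B+\bar N$. The Euler factors match formally: the alternating product defining $\langle-,-\rangle$ descends to a bi-multiplicative pairing on $K(\mathcal A)$ (the alternating product over a long exact sequence of finite $\mathrm{Ext}$ groups is trivial), so $\langle M,N\rangle=\langle A,N\rangle\langle B,N\rangle$ and $\langle A,C\rangle=\langle A,B\rangle\langle A,N\rangle$, and both sides carry the common factor $\langle A,N\rangle\langle A,B\rangle\langle B,N\rangle$. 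Writing $g_{X,Y}^{Z}$ for the number of subobjects $Y'\subseteq Z$ with $Y'\cong Y$ and $Z/Y'\cong X$, one has $\mathbf{P}_{X,Y}^{Z}=a_X a_Y\,g_{X,Y}^{Z}$ (choose the subobject $Y'$, then an isomorphism $Y\xrightarrow{\sim}Y'$ and an isomorphism $Z/Y'\xrightarrow{\sim}X$), and substituting this the remaining automorphism orders collapse to the common weight $a_A a_B a_N/a_R$ on each side, reducing the claim to $\sum_{M}g_{M,N}^{R}g_{A,B}^{M}=\sum_{C}g_{A,C}^{R}g_{B,N}^{C}$. Both sides count precisely the two-step filtrations $0\subseteq R''\subseteq R'\subseteq R$ with $R''\cong N$, $R'/R''\cong B$ and $R/R'\cong A$ --- on the left one groups these filtrations by the isoclass of $R/R''$, on the right by the isoclass of $R'$ --- so the identity holds tautologically.

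The main obstacle, such as it is, lies in the bookkeeping of the last step: keeping the three slots $[A]\otimes[B]\otimes[N]$ aligned with the filtration layers consistently across the two decompositions, and checking that the powers of $a_M$ and $a_C$ thrown off by those decompositions are exactly absorbed by the relation $\mathbf{P}_{X,Y}^{Z}=a_X a_Y g_{X,Y}^{Z}$. Once the normalization is pinned down, the Euler factors cancel by bi-multiplicativity and the combinatorial core is the one-line refinement-of-filtrations bijection; alternatively one may simply invoke associativity of the multiplication on $\Hall$, to which this numerical identity is equivalent.
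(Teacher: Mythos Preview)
The paper does not supply its own proof of this proposition: it is stated with a citation to Green and then used as a black box. Your argument is the standard one and is correct. The counit verification is exactly right, and the coassociativity reduction --- strip off the Euler factors by bi-multiplicativity on $K(\mathcal A)$, convert $\mathbf{P}_{X,Y}^{Z}=a_X a_Y\,g_{X,Y}^{Z}$, and identify both sides with the count of two-step flags $R''\subseteq R'\subseteq R$ of prescribed subquotient types --- is precisely Green's (and Schiffmann's) argument.

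One small caveat on the well-definedness paragraph: in the generality of an arbitrary finitary $\mathcal A$ as set up in the paper, your assertion that ``each such component contributes only finitely many isoclasses'' is stronger than what the axioms $|\mathrm{Hom}|<\infty$, $|\mathrm{Ext}^1|<\infty$ immediately give, and indeed the paper introduces the completion $\Hall\widehat{\otimes}\Hall$ exactly to avoid having to justify that $\Delta([R])$ is a finite sum. What you actually need for the iterates is only that, for fixed $[A]\otimes[B]\otimes[N]$, the inner sum over $M$ (resp.\ $C$) is finite; this follows because any such $M$ is a quotient $R/N'$ for some subobject $N'\subseteq R$ with $N'\cong N$, and there are at most $|\mathrm{Hom}(N,R)|<\infty$ such $N'$. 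With that adjustment your argument goes through verbatim.
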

   
\subsection{$\Hall$ is a bialgebra over hereditary finitary category}

     By the result of the previous section, $\Hall$ is an algebra and coalgebra. But in general $\Hall$ is not a bialgebra. An algebra $B(\cdot,i,\epsilon, \delta)$ is called a bialgebra, if it satisfies the following conditions:
     \begin{itemize}
      \item[1] B is a vector space over a field k.
      \item[2] There are k-linear map (multiplication) $\cdot: B\otimes B \mapsto B$ and unit $i: k\mapsto B$ such that $(B,\times, i)$ is an unital associative algebra.
      \item[3] There are $\Delta: B\mapsto B\otimes B$ and counit $\epsilon: B\mapsto k$ such that $(B, \Delta, \epsilon)$ is a coalgebra.
      \item[4] Compatibility conditions expressed by the following commutative diagrams:

    \[
     \xymatrix{
       B \otimes B \ar[r]^\cdot \ar[d]_{\cdot \otimes \Delta} & B \ar[r]^\Delta & B \otimes B \\
       B \otimes B \otimes B \otimes B \ar[rr]^{\mbox{id} \otimes \tau \otimes \mbox{id}} & & B \otimes B \otimes B \otimes B \ar[u]_{\cdot \otimes \cdot}
         }
    \]
    \[
     \xymatrix{
        B \otimes B \ar[rr]_\cdot \ar[rd]_{\epsilon \otimes \epsilon} & & B \ar[ld]_\epsilon \\
        & k \otimes k \cong k & \\
      }
    \]
    \[
     \xymatrix{
      & k \otimes k \cong k \ar[ld]_{i \otimes i} \ar[rd]^i & \\
      B \otimes B  & & B \ar[ll]^\Delta \\
     }
    \]
    \[
    \xymatrix{
       k\ar[dd]_{id}\ar[dr]_{i}\\ & B\ar[dl]_{\epsilon}\\k
    }
    \]
    \end{itemize}
     Green proved that $\Hall$ is a bialgebra when $\mathcal{A}$ is hereditary.\cite{Green} From now on we assume that $\mathcal{A}$ is hereditary, i.e., that is for any two objects $M,N \in \mathcal{A}$, $Ext^i(M,N) = 0$ for $ i \geqslant 2.$ We slightly twist the product on $\Hall\otimes\Hall:$ Because $\Hall$ is a graded algebra, so it contains homogeneous elements in the sense of graded algebra. If $x,y,z,w$ are homogeneous elements of $\Hall$ of respect weight $wt(x),wt(y), wt(z), wt(w)\in K(\mathcal{A})$, then define$$(x\otimes y)\cdot (z\otimes w) = \langle wt(y),wt(z)\rangle\langle wt(z), wt(y)\rangle(xz\otimes yw)$$ The following is proved by Green \cite{Green}.

   \begin{prop}
         (Green) Assume that $\mathcal{A}$ is hereditary. The map $\Delta:\Hall \rightarrow \Hall\widehat{\otimes}\Hall$ is an algebra homomorphism, i.e. for any two $x,y \in \Hall$, $\Delta(x\cdot y) = \Delta(x)\cdot\Delta(y)$.
   \end{prop}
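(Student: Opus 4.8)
The plan is to reduce to basis elements: since $\Delta$ and both products are linear, it suffices to prove $\Delta([M]\cdot[N])=\Delta([M])\cdot\Delta([N])$ for all objects $M,N$ of $\mathcal{A}$, and then to compare the coefficient of each basis vector $[A]\otimes[B]$ of $\Hall\widehat{\otimes}\Hall$ on the two sides. Expanding the left-hand side by applying the product and then the coproduct gives
$$\Delta([M]\cdot[N])=\langle M,N\rangle\sum_{A,B}\langle A,B\rangle\left(\sum_R\frac{\mathbf{P}_{M,N}^{R}\,\mathbf{P}_{A,B}^{R}}{a_Ma_Na_R}\right)[A]\otimes[B],$$
so the coefficient of $[A]\otimes[B]$ on the left is $\langle M,N\rangle\langle A,B\rangle\sum_R \mathbf{P}_{M,N}^{R}\mathbf{P}_{A,B}^{R}/(a_Ma_Na_R)$.

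For the right-hand side I would first apply $\Delta$ to $[M]$ and to $[N]$ separately, then multiply the two results in $\Hall\widehat{\otimes}\Hall$ using the twisted product $(x\otimes y)\cdot(z\otimes w)=\langle wt(y),wt(z)\rangle\langle wt(z),wt(y)\rangle(xz\otimes yw)$, and finally re-expand the two internal Hall products $[M_1][N_1]$ and $[M_2][N_2]$. Collecting the coefficient of $[A]\otimes[B]$ then produces a sum over quadruples $(M_1,M_2,N_1,N_2)$ of the product $\mathbf{P}_{M_1,N_1}^{A}\mathbf{P}_{M_2,N_2}^{B}\mathbf{P}_{M_1,M_2}^{M}\mathbf{P}_{N_1,N_2}^{N}$ against a prefactor assembled from the various $\langle\,,\rangle$ factors and from the twisting factor $\langle \overline{M_2},\overline{N_1}\rangle\langle\overline{N_1},\overline{M_2}\rangle$. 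Here the hereditary hypothesis enters crucially: because $\mathcal{A}$ is hereditary the multiplicative Euler form satisfies $\langle X,Y\rangle^2=|Hom_{\mathcal{A}}(X,Y)|/|Ext_{\mathcal{A}}^1(X,Y)|$ and descends to $K(\mathcal{A})$, so using $\overline{M}=\overline{M_1}+\overline{M_2}$ and $\overline{N}=\overline{N_1}+\overline{N_2}$ whenever the relevant $\mathbf{P}$'s are nonzero, all the Euler-form prefactors on both sides can be collapsed so that matching coefficients is equivalent to the purely homological identity
$$\sum_R\frac{\mathbf{P}_{M,N}^{R}\,\mathbf{P}_{A,B}^{R}}{a_R}=\sum_{M_1,M_2,N_1,N_2}\frac{|Ext_{\mathcal{A}}^1(M_2,N_1)|}{|Hom_{\mathcal{A}}(M_2,N_1)|}\cdot\frac{\mathbf{P}_{M_1,N_1}^{A}\,\mathbf{P}_{M_2,N_2}^{B}\,\mathbf{P}_{M_1,M_2}^{M}\,\mathbf{P}_{N_1,N_2}^{N}}{a_{M_1}a_{M_2}a_{N_1}a_{N_2}}.$$

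Establishing this identity is the main obstacle, and I would prove it by a double count. The left side counts, for each $R$, pairs consisting of a subobject $U\subseteq R$ with $U\cong N$ and $R/U\cong M$, together with a subobject $V\subseteq R$ with $V\cong B$ and $R/V\cong A$, each configuration weighted by the appropriate automorphism factors. From such a configuration the Zassenhaus (butterfly) lemma reads off four subquotients $N_1\cong U\cap V$, $A_{\text{from}}\cong V/N_1$, etc., yielding canonical short exact sequences realizing $M$ and $N$ from $(M_1,M_2)$ and $(N_1,N_2)$, and realizing $A$ and $B$ from $(M_1,N_1)$ and $(M_2,N_2)$, together with a "cross-extension" class in $Ext_{\mathcal{A}}^1(M_2,N_1)$ recording how $U$ and $V$ are glued inside $R$; heredity ensures both that $R$ can always be reconstructed from this data and that the relevant higher Ext groups vanish so no further obstruction appears. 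The content of the lemma is that this assignment is a bijection onto the set of such tuples of objects and short exact sequences, with the $Ext_{\mathcal{A}}^1(M_2,N_1)$-ambiguity and the $Hom_{\mathcal{A}}(M_2,N_1)$-redundancy accounting for the ratio in the formula; carefully tracking the automorphism weights through this bijection gives the stated count. Once the identity is in hand, substituting back and cancelling the common Euler-form factor from the bookkeeping step completes the proof.
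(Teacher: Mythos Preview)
The paper does not prove this proposition at all: it is stated with attribution to Green and a citation to \cite{Green}, and no argument appears in the text. So there is no in-paper proof to compare your proposal against.

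For what it is worth, your outline is exactly the standard proof (Green's own, as reproduced for instance in Schiffmann's lectures \cite{Schiffmann} that the paper also cites): reduce by linearity to basis elements, expand both sides and compare the coefficient of $[A]\otimes[B]$, use heredity so that the multiplicative Euler form descends to $K(\mathcal{A})$ and the various $\langle\,,\,\rangle$ prefactors can be matched, and reduce the statement to Green's counting identity, which is then established by the pull-back/push-out (``cross'') construction you describe. One small bookkeeping slip: with your conventions ($M_1$ the quotient and $M_2$ the subobject of $M$, and likewise for $N$), carrying out the Euler-form cancellation leaves the factor $\langle M_1,N_2\rangle^{-2}=|Ext^1_{\mathcal{A}}(M_1,N_2)|/|Hom_{\mathcal{A}}(M_1,N_2)|$, i.e.\ the pair (quotient-of-the-quotient, sub-of-the-sub), rather than $(M_2,N_1)$ as you wrote; correspondingly the cross-extension class in the bijection lives in $Ext^1_{\mathcal{A}}(M_1,N_2)$. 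This is only a labeling issue and does not affect the structure of your argument.
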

   
Because the above proposition, we have that $\Hall$ is a bialgebra.

\chapter{Categories of representations of quivers}
\section{Introduction}
   In the previous section, we showed that $\Hall$ is a bialgebra. Furthermore Xiao in \cite{Xiao} proved that for the category of representations of a quiver, the extended hall algebra $\Tilde{{\mathbf{H}}}_\mathcal{A}$, which we define in section 5.3.1, is a Hopf algebra. So we introduce representations of quivers in this section. The main reference in this section is from \cite{Schiffmann}

\section{Representations of quivers}
  \begin{definition}
       A quiver $Q$ is an oriented graph with vertices and arrows between vertices. (Multiple arrows and loop arrows are allowed in $Q$.) Let $Q_0$ denote the set of vertices and $Q_1$ denote the set of arrows. For $a \in Q_1$, denote $h(a)$ the vertex $a$ points to, and $t(a)$ the other endpoint of a. 
   \end{definition}
   
   \begin{definition}
      For a quiver $Q$, a representation of $Q$ over a field $k$ is an assignment of a vector space $V_i$ over $k$ for each $i\in Q_0$ and a linear morphism $\rho_a: V_i \rightarrow V_j$ for each $a\in Q_1$ from $i$ to $j$. So, we denote a representation of a fixed $Q$ as $(\bigoplus_{i\in Q_0}V_i,(\rho_a)_{a \in Q_1})$.
    \end{definition}
    
    For a fixed quiver, a morphism between two representations $(V_i,\rho_a)$, $(V_i', \rho'_a)$ is $(f_i: V_i \rightarrow V'_i)$, $i\in Q_0$ such that the following diagram commute for any $i,j\in Q_0$ and any $a \in Q_1$ such that $h(a)=j, t(a)=i$. 
    
     \begin{figure}[!h]
        \centering
        \begin{tikzpicture}
             \matrix(m)[matrix of math nodes, row sep=3em,column sep=4em,minimum width=2em]{V_i & V_j \\ V_i' & V_j' \\};
          \path [-stealth] 
            (m-1-1) edge node [left] {$f_i$} (m-2-1);
                    edge node [above] {$\rho_a$} (m-1-2);
            (m-2-1) edge node [below] {$\rho_a$} (m-2-2);
            (m-1-2) edge node [right] {$f_j$}(m-2-2);      
       \end{tikzpicture}
    \end{figure}

   A representation is called a nilpotent representation if there exist an integer $n$, such that $\rho_{a_1} \rho_{a_2}... \rho_{a_n} = 0$ for any $a_i\in Q_0, i=1,...,n-1$, satisfying $h(a_i) = t(a_{i+1})$.\\
   A representation is called an indecomposable representation if it cannot be decomposed as a direct sum of subrepresentations. \\
   A representation is called simple if there is no subrepresentation. We denote simple representations as $S_i, i\in Q_0$
 \subsection{Representations of Jordan quiver}
 \begin{example} 
      A quiver is called a Jordan quiver if $|Q_0| = 1$ and $|Q_1| = 1$. Up to isomorphism, there is only one Jordan quiver. All indecomposable representations of the Jordan quiver is given by a $n \times n$ matrix,
         \begin{figure}[!h]
             \centering
             $\rho = 
           \begin{pmatrix}
              \mu & 1 & 0 & \cdots & 0 \\
              0 & \mu & 1 & \cdots & 0 \\
              0 & 0 & \ddots & \ddots \\
              0 & 0 & \dots & \mu & 1\\
              0 & 0 & \dots & 0 & \mu
           \end{pmatrix}$
         \end{figure}

     where $n = \text{dim } V$. Denote an indecomposable representation of dimension $n$, diagonal $\mu$ by $I_{[n; \mu]}$.  When we consider nilpotent representations of Jordan quiver, then $\rho^t=0$ for some $t\ge 0$, then the eigenvalue $\mu$ of $\rho$ is $\mu = 0$. If we introduce the notation that $I_{[n]}=I_{[n;0]}$, the nilpotent representation looks like $(V, \rho) = I_{[1]}^{\oplus r_1}\oplus\dots\oplus I_{[n]}^{\oplus r_n}$. So Krull-Schmidt theorem for representations asserts that every representation $(V, \rho) = I_{[1]}^{\oplus r_1}\oplus\dots\oplus I_{[n]}^{\oplus r_n}$ where $n = 1r_1+...+nr_n, r_i\geq 0$. This works for a field no matter that is algebraically closed or not. So if a partition $\lambda$ of $n$ is $\lambda = (1^{r_1},...,n^{r_n})$, and $I_{\lambda} = I_{[1]}^{\oplus r_1}\oplus\dots\oplus I_{[n]}^{\oplus r_n}$, then every nilpotent representation $(V,\rho) = I_{\lambda}$.
   \end{example}
   
\subsection{Representations of Cyclic quiver}
  \begin{example}
      The quiver is called the cyclic quiver of $n$ vertices, denoted as $\hat{A}_n$, is the unique (up to isomorphism) quiver with the diagram looks like the following
     \begin{figure}[!h]
         \centering
         \begin{tikzpicture}
         \matrix(m)[matrix of math nodes, row sep=4em,column sep=6em,minimum width=2em] {& 0 & \\ 1 & 2 & ... & n-1\\};
         \path[-stealth]
           (m-2-1) edge (m-1-2);
           (m-2-2) edge (m-2-1);
           (m-2-3) edge (m-2-2);
           (m-2-4) edge (m-2-3);
           (m-1-2) edge (m-2-4);
      \end{tikzpicture}
    \end{figure}

    For $i\in \{0,1,...,n-1\}$ and $l\geq 1$. Consider $V' = \oplus_{j=i+1-l}^ike_j$ and define $x\in End(V')$ by $x(e_j) = e_{j-1}, j\neq i+1-l$ and $x(e_{i+1-l}) = 0$. Let $V_h = \oplus_{j\equiv h} ke_j, V = \oplus_{h\in \mathbb{Z}/n\mathbb{Z}}V_h$. The indecomposable nilpotent representations of $\hat{A}_n$ are all $V$ with different $i$, we denote all these representations as $I_{[i:l]}$. Then any nilpotent representation is a direct sum of $I_{[i;l]}$ with various value for $i, l$
  \end{example}
  
\section{Hall algebra of quiver representations}
   Denote category of representations of quiver $Q$ over field $k$ as $Rep_kQ$, $Rep_kQ$ is a hereditary finitary category. So by the definition, there is a Hall algebra $\mathbf H_{Rep_kQ}$ for an arbitrary quiver $Q$, we shorten the denotation $\mathbf H_{Rep_kQ}$ as $\mathbf H_{Q}$. Furthermore we denote the category of nilpotent representation of quiver $Q$ over field $k$ as $Rep_k^{nil}Q$, and the Hall algebra over $Rep_k^{nil}Q$ as $\mathbf H^{nil}_Q$.
  \subsection{Hall algebra is a Hopf algebra}
    Xiao in \cite{Xiao} proves that extended Hall algbra of category of representations of quivers are Hopf algebras. We will introduce it here.
    \begin{definition}
       Let $\mathbf{K}=\mathbb{C}[K(\mathcal{A})]$. We define an $extended$ Hall algebra $\Tilde{\mathbf{H}}_\mathcal{A} = \Hall\otimes_{\mathcal{C}}\mathbf{K}$ where by adding the relations $$\mathbf{k}_\alpha[M]\mathbf{k}_\alpha^{-1}=(\alpha, M)[M]$$ The extended Hall algebra $\Tilde{\mathbf{H}}_\mathcal{A}$ is still graded where $deg(k_\alpha)=0$. If we define the multiplication on $\Tilde{\mathbf{H}}_\mathcal{A}\otimes \Tilde{\mathbf{H}}_\mathcal{A}$ as $(x\otimes y)(z\otimes w) = xz\otimes yw$, then $\Tilde{\mathbf{H}}_\mathcal{A}$ is still a bialgebra when $\mathcal{A}$ is hereditary with comultiplication $$\Delta(\mathbf{k}_\alpha)=\mathbf{k}_\alpha\otimes \mathbf{k}_\alpha,$$ $$\Delta([R]\mathbf{k}_\alpha) = \sum_{M,N}\langle M,N\rangle\frac{1}{a_R}\mathbf{P}_{M,N}^R[M]\mathbf{k}_{[N]+\alpha}\otimes[N]\mathbf{k}_\alpha.$$ We can extend the antipode $S$ in Xiao's paper \cite{Xiao} to $S([M]\mathbf{k}_\alpha)=\mathbf{k}_{\alpha}^{-1}S([M])$, then $\Tilde{\mathbf{H}}_\mathcal{A}$ is a Hopf algebra.
    \end{definition}
    
\section{Isomorphism between Hall algebra and quantum Lie group}
   Suppose that $\mathbf{U}_v(\mathfrak{b}_+')$ is the positive half of the quantum group $\mathbf{U}_v(\mathfrak{g'})$ corresponding to $\mathfrak{g}$. The detailed information about quantum group can be found in \cite{Schiffmann} appendix A.4. Here we have the following nice property.
  \begin{prop}
  (Ringel \cite{Ringel},Green \cite{Green}) There is an embedding of Hopf algebra  $$\Phi:\mathbf{U}_v(\mathfrak{b}_+')\rightarrow \Tilde{\mathbf{H}}_Q $$ by sending $E_i\rightarrow [S_i], K_i\rightarrow \mathbf{k_{S_i}}$ for $i\in I$. The map $\Phi$ is an isomorphism when $Q$ is of finite type.
  \end{prop}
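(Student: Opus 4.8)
The plan is to check that the assignment $E_i\mapsto[S_i]$, $K_i\mapsto\mathbf{k}_{S_i}$ respects the defining relations of $\mathbf{U}_v(\mathfrak{b}_+')$, so that $\Phi$ is a well-defined algebra map; then to promote it to a morphism of Hopf algebras; and finally to establish injectivity in general and surjectivity in the Dynkin case.

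First I would verify the relations. The relations $K_iK_i^{-1}=1$ and $K_iE_jK_i^{-1}=v^{a_{ij}}E_j$ (with $a_{ij}$ the Cartan integers) follow at once from the defining relation $\mathbf{k}_\alpha[M]\mathbf{k}_\alpha^{-1}=(\alpha,M)[M]$ of $\widetilde{\mathbf{H}}_Q$, together with the observation that for a quiver without loops the symmetrized Euler form $(\,\cdot\,,\,\cdot\,)$ agrees with the symmetrized Cartan form of $\mathfrak{g}$, so that $(S_i,S_j)$ produces exactly the expected exponent. The real content is the quantum Serre relation: since it involves only two indices $i\neq j$, it may be checked inside the Hall algebra of the full subquiver on $\{i,j\}$, which is of type $A_1\times A_1$ or $A_2$. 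There the only isoclasses that enter are direct sums of $S_i$, $S_j$ and, in the $A_2$ case, the unique indecomposable extension, and one computes the finitely many Hall numbers $\mathbf{P}_{M,N}^R$ directly; substituting these into the alternating sum shows that it vanishes. This yields a well-defined algebra homomorphism $\Phi\colon\mathbf{U}_v(\mathfrak{b}_+')\to\widetilde{\mathbf{H}}_Q$.

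Next I would show $\Phi$ is a morphism of bialgebras, hence of Hopf algebras. Both sides are generated as algebras by the $E_i$ (resp.\ the $[S_i]$) together with the group-likes, and on each side $\Delta$ is an algebra map, so it suffices to check $\Delta\circ\Phi=(\Phi\otimes\Phi)\circ\Delta$ on generators. For the group-likes this is immediate. For $E_i$, since $S_i$ is simple its only subobject and quotient are $0$ and $S_i$, so the general coproduct formula collapses to $\Delta([S_i])=[S_i]\otimes1+\mathbf{k}_{S_i}\otimes[S_i]$, which is precisely the image of $\Delta(E_i)=E_i\otimes1+K_i\otimes E_i$. Compatibility with the counit and the antipode is verified the same way on generators, using $\epsilon([S_i])=0$ and $S([S_i])=-\mathbf{k}_{S_i}^{-1}[S_i]$.

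For injectivity I would use Green's Hopf pairing on $\mathbf{H}_Q$: it is nondegenerate, it restricts to the composition subalgebra generated by the $[S_i]$, and under $\Phi$ it corresponds to the Drinfeld pairing on $\mathbf{U}_v(\mathfrak{n}_+')$ (the subalgebra generated by the $E_i$), which is likewise nondegenerate; hence $\ker\Phi$ lies in the radical of the pairing and is therefore $0$. Equivalently, one exhibits a PBW-type spanning set of $\mathbf{U}_v(\mathfrak{n}_+')$ and checks that $\Phi$ carries it to linearly independent elements. Finally, surjectivity when $Q$ is of finite type: by Gabriel's theorem the indecomposables are in bijection with the finitely many positive roots and Ringel's Hall polynomials exist, so one can induct on the dimension vector --- using the Auslander--Reiten quiver, or an order in which forming an extension of an indecomposable by a simple yields that indecomposable as leading term plus strictly smaller terms --- to show that every $[M]$ lies in the subalgebra generated by the $[S_i]$. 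Thus the composition algebra exhausts $\mathbf{H}_Q$ and, after adjoining the $\mathbf{k}_\alpha$, $\Phi$ is onto; with injectivity this gives the isomorphism. The hard part will be this last step: it genuinely needs the Hall polynomial machinery and a careful induction through Auslander--Reiten theory (or Ringel's filtration argument) to realize an arbitrary indecomposable as the leading monomial in the simples, whereas the rank-two Serre computation is routine and the Hopf-compatibility and injectivity steps are formal once the correct pairing is identified.
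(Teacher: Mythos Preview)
The paper does not supply a proof of this proposition at all: it is stated as a cited result of Ringel and Green and then immediately used, with no argument in the text. So there is no proof in the paper to compare your proposal against.

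Your sketch is the standard route through the literature and is essentially correct. One small inaccuracy: when you reduce the quantum Serre relation to the full subquiver on $\{i,j\}$, you say this subquiver is of type $A_1\times A_1$ or $A_2$. That is true only for simply-laced quivers with at most one arrow between any pair of vertices; in general there may be several arrows from $i$ to $j$, giving a Kronecker-type subquiver, and the Serre relation of the correct order has to be verified there. The method still works, but the computation is not just the two cases you name. Apart from that, the outline---relations on generators, Hopf compatibility via simplicity of $S_i$, injectivity via Green's nondegenerate pairing, and surjectivity in finite type via Gabriel's theorem and induction on dimension vector---is exactly how the result is established in the cited sources.
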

  We can restrict the above map $\Phi$ to $\mathbf{U}_v(\mathfrak{n}_+')$ which gives an embedding $\mathbf{U}_v(\mathfrak{n}_+') \rightarrow \Tilde{\mathbf{H}}_Q$. The image of this map is called composition algebra, which is the subalgebra of $\Tilde{\mathbf{H}}_Q$ generated by simple representations.

\chapter{Primitive elements in the Hall algebra}
\section{Introduction}
   From this section, we introduce an isomorphism between ring of symmetric functions and center of Hall algebra over nilpotent representation of cyclic quiver. In Jordan quiver $\hat{A}_0$, there is an formula for primitive elements in $\mathbf H_{\hat{A}_0}^{nil}$ calculated by Hall, Steinitz and Macdonald \cite{Macdonald}. By the isomorphism we will introduce, we can give an explicit formula for primitive elements for all $H_{\hat{A}_n}^{nil}$. The main reference is from \cite{Hubery}.
   
\section{Primitive element of Jordan Quiver case}
   For $\hat{A}_0$ being the Jordan quiver, $\mathbf{H}_{\hat{A}_0}$ is defined over field $k=\mathbb{F}_q$. we introduce the isomorphism between $\mathbf H_{\hat{A}_0}^{nil}$ and ring of symmetric functions. Suppose $P_{\lambda}(\boldsymbol{X},t)$ denote Hall-Littlewood functions in $\Lambda[t]$.
\begin{prop} \label{prop:5.2.1}
(Steinitz, Hall, Macdonald \cite{Macdonald}). For $\hat{A}_0$ being the Jordan quiver, the field $k=\mathbb{F}_q$. There is an isomorphism $\Phi$ between $\Lambda[t]\otimes \mathbb Q(t^{1/2})$ and $\mathbf H_{\hat{A}_0}^{nil}$ where

  \begin{align}
     \begin{split}
         \Phi : \Lambda[t] \otimes_{\mathbb{Q}} \mathbb Q(t^\frac{1}{2}) &\rightarrow \mathbf H_{\hat{A}_0}^{nil}\\ t &\mapsto q^{-1}\\ t^{n(\lambda)}P_{\lambda}(\boldsymbol{X},t) &\mapsto I_{\lambda}
     \end{split}
 \end{align} where $n(\lambda) = \sum_i(i-1)\lambda_i$.
 
\end{prop}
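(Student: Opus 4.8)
The plan is to follow the classical route of Hall and Macdonald.

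\emph{Step 1 (reduce to the classical Hall algebra).} The category of finite-dimensional nilpotent representations of $\hat{A}_0$ is the category of finite-length modules over $k[[x]]$, which is hereditary since $k[[x]]$ has global dimension one. For $M$ of type $\lambda$ and $N$ of type $\mu$ the structure theorem gives $\dim_k\operatorname{Hom}(M,N)=\dim_k\operatorname{Ext}^1(M,N)=\sum_{i,j}\min(\lambda_i,\mu_j)$, so the multiplicative Euler form is $\langle M,N\rangle=1$ identically. Combining this with the standard identity $\mathbf P^{R}_{M,N}=a_Ma_N\cdot\#\{N'\subseteq R:N'\cong N,\ R/N'\cong M\}$, the Hall product reduces to $I_\mu\cdot I_\nu=\sum_\lambda g^{\lambda}_{\mu\nu}(q)\,I_\lambda$, where $g^{\lambda}_{\mu\nu}(q)$ counts the submodules of type $\nu$ with quotient of type $\mu$ in a module of type $\lambda$ (the value at $q$ of the Hall polynomial). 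In particular $\{I_\lambda\}$ is a basis; the $k$-linear self-duality $M\mapsto\operatorname{Hom}_k(M,k)$ of the category, which interchanges ``sub'' and ``quotient'', shows this product is commutative, and associativity is part of the general theory.

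\emph{Step 2 (set up $\Phi$, reduce to a Pieri identity).} On the symmetric-function side recall that $\{P_\lambda(\boldsymbol X;t)\}$ is a basis of $\Lambda\otimes_{\mathbb Q}\mathbb Q(t^{1/2})$, that $P_{(1^r)}(\boldsymbol X;t)=e_r(\boldsymbol X)$ for all $t$, and the Hall--Littlewood Pieri rule $e_r\cdot P_\mu(\boldsymbol X;t)=\sum_\lambda\psi'_{\lambda/\mu}(t)\,P_\lambda(\boldsymbol X;t)$, the sum running over $\lambda\supseteq\mu$ with $\lambda/\mu$ a vertical $r$-strip. Since each $t^{n(\lambda)}$ is invertible, $\{t^{n(\lambda)}P_\lambda(\boldsymbol X;t)\}$ is again a basis, so the prescription $t^{n(\lambda)}P_\lambda(\boldsymbol X;t)\mapsto I_\lambda$, $t\mapsto q^{-1}$ (hence $t^{1/2}\mapsto q^{-1/2}$, which plays no essential role here) defines a linear isomorphism $\Phi$; it remains to see $\Phi$ is a ring map. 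As $e_1,e_2,\dots$ generate $\Lambda$ as a $\mathbb Q$-algebra and $\mathbf H_{\hat{A}_0}^{nil}$ is associative, it suffices to prove $\Phi(e_r\cdot f)=\Phi(e_r)\cdot\Phi(f)$ for each $r$ and each basis vector $f=t^{n(\mu)}P_\mu(\boldsymbol X;t)$. Here $\Phi(e_r)=\Phi\big(t^{n(1^r)}P_{(1^r)}\big)=I_{(1^r)}$, so expanding the left side by the Hall--Littlewood Pieri rule and the right side by $I_{(1^r)}\cdot I_\mu=\sum_\lambda g^{\lambda}_{(1^r),\mu}(q)\,I_\lambda$ reduces the whole statement to the single family of identities
\[
 g^{\lambda}_{(1^r),\mu}(q)=\big(t^{\,n(\mu)-n(\lambda)}\psi'_{\lambda/\mu}(t)\big)\big|_{t=q^{-1}}\qquad(\mu\subseteq\lambda),
\]
both sides vanishing unless $\lambda/\mu$ is a vertical strip.

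\emph{Step 3 (the Pieri identity --- the main obstacle).} The crux is to evaluate $g^{\lambda}_{(1^r),\mu}(q)$, which by the self-duality of Step 1 equals the number of submodules $N$ of a module $M$ of type $\lambda$ with $N$ semisimple of length $r$ and $M/N$ of type $\mu$. Writing $M=\bigoplus_j k[[x]]/x^{\lambda_j}$ and reading off the $x$-adic (socle) filtration, such an $N$ corresponds to a column-by-column choice of a subspace complementary, inside a socle layer, to a fixed flag of cotype $\mu$; the resulting count is a product of Gaussian binomial coefficients indexed by the columns of $\lambda$, and this is exactly the $t=q^{-1}$ specialization of the known combinatorial expression for $t^{n(\mu)-n(\lambda)}\psi'_{\lambda/\mu}(t)$. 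This evaluation of Hall numbers for vertical strips is due to Hall and Macdonald \cite{Macdonald}. Granting it, an induction on $|\lambda|$ via the two Pieri rules shows $\Phi$ is multiplicative, hence the algebra isomorphism asserted. (One could instead avoid the explicit count: pull Green's Hopf pairing on $\mathbf H_{\hat{A}_0}^{nil}$ back along $\Phi$ and check that the images of the $I_\lambda$ are pairwise orthogonal and unitriangular against the monomial symmetric functions in dominance order --- Macdonald's characterization of $P_\lambda(\boldsymbol X;t)$ --- forcing $\Phi^{-1}(I_\lambda)=t^{-n(\lambda)}P_\lambda(\boldsymbol X;t)$.)
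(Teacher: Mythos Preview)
The paper does not supply a proof of this proposition; it is quoted as a classical result with attribution to Steinitz, Hall, and Macdonald and simply used as input. Your outline is essentially Macdonald's own argument (Chapter~III of \cite{Macdonald}), so the route is the expected one and there is nothing in the paper to compare it against.

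There is, however, one real bookkeeping slip in Step~2. You write $\Phi(e_r)=\Phi\bigl(t^{\,n((1^r))}P_{(1^r)}\bigr)=I_{(1^r)}$, implicitly identifying $e_r$ with $t^{\,n((1^r))}P_{(1^r)}$. But $P_{(1^r)}=e_r$, while $n((1^r))=\sum_{i=1}^r(i-1)=\binom{r}{2}\neq 0$ for $r\ge 2$; hence in fact $\Phi(e_r)=q^{\binom{r}{2}}I_{(1^r)}$. This missing power of $q$ propagates into your displayed Pieri identity, which should read
\[
g^{\lambda}_{(1^r),\mu}(q)\;=\;q^{\,n(\lambda)-n(\mu)-\binom{r}{2}}\,\psi'_{\lambda/\mu}(q^{-1}),
\]
i.e.\ your version is off by a factor $t^{\binom{r}{2}}$ on the right-hand side. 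With this correction the induction on $|\lambda|$ in Step~3 goes through exactly as you describe, matching Macdonald's treatment. The remaining ingredients---the triviality of the Euler form from $\dim\operatorname{Hom}=\dim\operatorname{Ext}^1$, the reduction to classical Hall numbers, commutativity via $k$-linear duality, and the alternative orthogonality characterization of the $P_\lambda$---are all sound.
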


In $\Lambda[t]$, power sum $p_n(\boldsymbol{X})=\sum_iX_i^n$ are the primitive elements. In \cite{Macdonald}, Macdonald shows that $p_n(\boldsymbol{X}) = \sum\limits_{\lambda\vdash n}t^{n(\lambda)}\prod\limits_{i=1}^{l(\lambda)-1}(1-t^{-i})P_{\lambda}(\boldsymbol{X};t).$ Under proposition \ref{prop:5.2.1}, we have 
  \begin{align}
     \begin{split}
         p_n(\boldsymbol{X}) \mapsto \sum_{\lambda\vdash n}(1-q)...(1-q^{l(\lambda)-1})I_\lambda
     \end{split}
  \end{align} 
Because $p_n(\boldsymbol{X})$ in $\Lambda[t]$ are primitive elements, so $\sum_{\lambda\vdash n}(1-q)...(1-q^{l(\lambda)-1})I_\lambda$ are primitive elements in $\mathbf H_{\hat{A}_0}^{nil}$.

\section{Primitive elements in Hall algebra for Cyclic quiver}
Now we extend the above map to the case of cyclic quiver $\hat{A}_N$, similarly the field is $k=\mathbb{F}_q$.
\subsection{Isomorphism between $\Lambda[t]$ and central subalgebra of $\mathbf H_{\hat{A}_N}^{nil}$}
  \begin{prop}(Shiffmann\cite{Schiffmann[2]}, Hubery\cite{Hubery})

    \begin{itemize}
      \item[1] $\mathbf H_{\hat{A}_N}^{nil} \cong C_N \otimes Z_N $ as self-dual graded Hopf algebra, where $Z_N$ is the center of $\mathbf H_{\hat{A}_N}^{nil}$. $C_N$ is the composition algebra.
      \item[2] $z_N = \mathbb{Q}(q^{1/2})[z_1, z_2,....]$ where 
        \begin{align}
            z_r =(-q^{-1})^{rN}\sum_{\begin{aligned}
           &dim [M]= r\delta \\ &socle(M) \text{ is square-free}\\
       \end{aligned}}(-1)^{\text{dim End(M)}} |Aut(M)|[M]
        \end{align}
        
       Here socle (M) = $\sum_{\text{N is a simple submodule of M}} N $

       \item[3] There is a natural isomorphism 
         \begin{align}
           \begin{split}
               \Lambda[t]\otimes\mathbb{Q}(t^\frac{1}{2N}) & \rightarrow Z_N \\
                 t &\mapsto q^{-N}\\ c_r(X;t) & \mapsto z_r
           \end{split}
         \end{align}
    \end{itemize}
\end{prop}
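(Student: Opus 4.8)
The plan is to establish the three assertions, though not quite in the stated order: the natural logical sequence is to first construct the central elements $z_r$ and compute their comultiplication, then deduce the tensor decomposition, and finally recognize the resulting subalgebra as $\Lambda[t]$. Throughout I grade $\mathbf{H}_{\hat{A}_N}^{nil}$ by dimension vector in $\mathbb{Z}_{\ge 0}^N$, write $\delta = (1,\dots,1)$ for the minimal positive imaginary root so that the ``imaginary'' graded pieces are those of degree $r\delta$ with $r\ge 1$, and take as given Green's coproduct formula and bialgebra theorem of Chapter 4, the Ringel--Green embedding of the quantum affine algebra of Chapter 5, and the Steinitz--Hall--Macdonald isomorphism of Proposition \ref{prop:5.2.1}. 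Let $C_N\subseteq\mathbf{H}_{\hat{A}_N}^{nil}$ denote the subalgebra generated by the simple classes $[S_0],\dots,[S_{N-1}]$, which by the embedding of Chapter 5 for the loop-free cyclic quiver is a graded sub-Hopf-algebra isomorphic to $\mathbf{U}_v^+(\widehat{\mathfrak{sl}}_N)$.

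\textbf{Step 1 (construction of the $z_r$, assertion (2)).} Since $C_N$ is generated by the $[S_i]$, an element of $\mathbf{H}_{\hat{A}_N}^{nil}$ centralizes $C_N$ if and only if it commutes with every $[S_i]$. I would take an unknown element of imaginary degree $r\delta$, written in the normalized form $z_r = (-q^{-1})^{rN}\sum_{\dim M = r\delta} a_M\,|\mathrm{Aut}(M)|\,[M]$, expand the commutators $[z_r,[S_i]]$ using Green's product formula, and solve the resulting linear system in the $a_M$. The obstruction to commuting with $[S_i]$ is a signed comparison of the extensions $0\to S_i\to M'\to M\to 0$ against the extensions $0\to M\to M''\to S_i\to 0$; these contributions cancel in pairs precisely when $a_M = (-1)^{\dim\mathrm{End}(M)}$ on the modules $M$ of dimension $r\delta$ whose socle is square-free and $a_M = 0$ otherwise, which is the formula in the statement. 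The same computation shows the centralizer of $C_N$ in each imaginary degree has dimension $1$ modulo lower degrees, so the centralizer is the polynomial ring $Z_N := \mathbb{Q}(q^{1/2})[z_1,z_2,\dots]$ and the $z_r$ are algebraically independent. \textbf{This bookkeeping of Hall numbers, automorphism orders and signs is where I expect the real work, and it is the main obstacle of the proposition.}

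\textbf{Step 2 (comultiplication and the decomposition, assertion (1)).} Applying Green's coproduct formula to the explicit $z_r$ and again tracking short exact sequences $0\to N\to M\to M/N\to 0$ with $\dim M = r\delta$ and square-free socle, I would obtain $\Delta(z_r) = \sum_{a+b=r} z_a\otimes z_b$ with $z_0=1$; in particular $z_1$ is primitive, $Z_N$ is a graded sub-coalgebra, hence a graded sub-Hopf-algebra. To conclude that multiplication $m: C_N\otimes Z_N\to\mathbf{H}_{\hat{A}_N}^{nil}$ is a Hopf-algebra isomorphism I would compare graded dimensions: using the standard basis $\{[M]\}$ of $\mathbf{H}_{\hat{A}_N}^{nil}$ indexed by multisegments together with the description of $C_N$ via aperiodic multisegments (Lusztig, Schiffmann), every multisegment splits as an aperiodic part (a basis element of $C_N$) times a periodic part (a partition, i.e. a monomial in the $z_r$), so $m$ is a linear isomorphism; it is an algebra map since the $z_r$ are central, a coalgebra map by the coproduct formula just established, and it carries Green's pairing to the tensor-product pairing once one checks $C_N$ and $Z_N$ are orthogonal, giving the self-duality. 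A posteriori $Z_N$ equals the full center.

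\textbf{Step 3 (identification with $\Lambda[t]$, assertion (3)).} Both $Z_N$ and $\Lambda[t]$ are polynomial rings --- the former in $\{z_r\}$ by Step 1, the latter in $\{c_r(X;t)\}$ by the change of basis of Chapter 2 and Proposition \ref{prop:3.1.3} --- so the assignment $c_r(X;t)\mapsto z_r$, $t\mapsto q^{-N}$ extends uniquely to a graded algebra isomorphism $\Lambda[t]\otimes\mathbb{Q}(t^{1/2N})\to Z_N$, the extra root $t^{1/2N}$ being adjoined only so that the scalar field $\mathbb{Q}(t^{1/2N})$ maps onto $\mathbb{Q}(q^{1/2})$. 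It is a Hopf (hence ``natural'') isomorphism because $\Delta(c_r(X;t)) = \sum_{a+b=r} c_a(X;t)\otimes c_b(X;t)$, which follows from the generating-function identity $1+\sum_n c_n(X;t)T^n = H(T)/H(tT) = \exp(\sum_{n}\frac{1-t^n}{n}p_nT^n)$ of equation (2.3.1) together with the primitivity of the $p_n$, matches $\Delta(z_r) = \sum_{a+b=r} z_a\otimes z_b$ from Step 2. Equivalently --- and this is the route I would actually take, following Hubery --- one transports the Steinitz--Hall--Macdonald isomorphism of Proposition \ref{prop:5.2.1}, with $q$ replaced by $q^N$, onto $Z_N$: it identifies the center of the cyclic quiver Hall algebra with the Hall algebra of the Jordan quiver, hence with $\Lambda[t]$, sending the $c_r(X;t)$ to the $z_r$ and pulling back the symmetric-function coproduct to Green's coproduct, which completes the proof.
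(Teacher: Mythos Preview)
The paper does not prove this proposition at all: it is stated with attribution to Schiffmann and Hubery and then used as a black box to transport the formula of Proposition~\ref{prop:3.1.3} into $\mathbf{H}_{\hat A_N}^{nil}$. There is therefore nothing in the paper's own argument to compare your proposal against.

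Your sketch is a reasonable outline of how the cited references actually establish the result (construct the central elements by solving the commutator equations with the $[S_i]$, verify the coproduct $\Delta(z_r)=\sum_{a+b=r}z_a\otimes z_b$, match graded dimensions via the aperiodic/periodic multisegment decomposition, then identify with $\Lambda[t]$ via the generating-function characterization of the $c_r$). For the purposes of this paper, however, the expected ``proof'' is simply a citation, so your proposal, while in the right spirit, supplies far more than the paper itself does.
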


\subsection{Primitive elements of Hall algebra of cyclic quiver}
By our equation (\ref{eq:3.1.8}), primitive elements $p_n \in \Lambda[t]$ has formula,
\begin{align}
    p_n = \frac{n}{1-t^{n}}\sum_{\lambda\vdash n}(-1)^{l(\lambda)+1}\frac{1}{l(\lambda)}{l(\lambda)\choose m_1(\lambda),...,m_n(\lambda)}c_{\lambda}(X;t)
\end{align}
Taking $p_n$ under the above isomorphism (5.3.1), we have

\begin{theorem}\label{theorem:6.3.2}
Primitive elements $p_n$ of $H_{\hat{A}_N}^{nil}$ has formula that 
\begin{align}
    p_n = \frac{n}{1-q^{-Nn}}\sum_{\lambda\vdash n}(-1)^{l(\lambda)+1}\frac{1}{l(\lambda)}{l(\lambda)\choose m_1(\lambda),...,m_n(\lambda)}z_{\lambda}
\end{align}
where $z_\lambda = z_{\lambda_1}^{m_1(\lambda)}...z_{\lambda_n}^{m_n(\lambda)}$
\end{theorem}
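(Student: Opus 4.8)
The plan is to transport the explicit formula for $p_n$ in $\Lambda[t]$ furnished by Proposition \ref{prop:3.1.3} across the isomorphism (5.3.1) of the preceding Proposition, and then to check that primitivity survives the transport.

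First I would record that, by Proposition \ref{prop:3.1.3} applied to $\Lambda[t]\otimes \mathbb{Q}(t^{1/2N})$, the primitive element $p_n$ equals $\frac{n}{1-t^n}\sum_{\lambda\vdash n}(-1)^{l(\lambda)+1}\frac{1}{l(\lambda)}\binom{l(\lambda)}{m_1(\lambda),\dots,m_n(\lambda)}c_\lambda(X;t)$, with $c_\lambda(X;t)=c_1(X;t)^{m_1(\lambda)}\cdots c_n(X;t)^{m_n(\lambda)}$. Next I apply the ring isomorphism of part (3), $t\mapsto q^{-N}$, $c_r(X;t)\mapsto z_r$. Multiplicativity gives $c_\lambda(X;t)\mapsto z_{\lambda_1}^{m_1(\lambda)}\cdots z_{\lambda_n}^{m_n(\lambda)}=z_\lambda$, the scalar $\frac{n}{1-t^n}$ goes to $\frac{n}{1-q^{-Nn}}$ (which is well defined since $q^{-Nn}\neq 1$), and the numerical coefficients $(-1)^{l(\lambda)+1}\frac{1}{l(\lambda)}\binom{l(\lambda)}{m_1(\lambda),\dots,m_n(\lambda)}$ are unchanged. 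This produces exactly the asserted formula as an element of $Z_N$. A small bookkeeping point here is that under $t\mapsto q^{-N}$ one has $t^{1/2N}\mapsto q^{-1/2}$, so the coefficient field $\mathbb{Q}(t^{1/2N})$ is carried onto $\mathbb{Q}(q^{1/2})$, consistent with the ground field of $\mathbf{H}_{\hat{A}_N}^{nil}$.

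It then remains to verify that this image is a primitive element of $\mathbf{H}_{\hat{A}_N}^{nil}$, not merely an element of $Z_N$. For this I would invoke two structural facts from the cited Proposition. The isomorphism (5.3.1) is an isomorphism of graded Hopf algebras, so it carries the primitive $p_n\in\Lambda[t]$ (primitive by the discussion in Section 2.2) to a primitive element of $Z_N$. And by part (1), $\mathbf{H}_{\hat{A}_N}^{nil}\cong C_N\otimes Z_N$ as graded Hopf algebras, so the inclusion $Z_N\hookrightarrow \mathbf{H}_{\hat{A}_N}^{nil}$ is in particular a morphism of coalgebras; hence an element primitive in $Z_N$ is primitive in $\mathbf{H}_{\hat{A}_N}^{nil}$. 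Combining the two steps yields the theorem.

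The main obstacle is precisely the second half: ensuring that the comultiplication on $Z_N$ used in (5.3.1) really is the restriction of the coproduct of $\mathbf{H}_{\hat{A}_N}^{nil}$, and that $Z_N$ is a sub-Hopf-algebra (so that ``primitive in $Z_N$'' implies ``primitive in $\mathbf{H}_{\hat{A}_N}^{nil}$''). This is exactly the content of the Schiffmann--Hubery structure theorem quoted as parts (1)--(3) of the Proposition, so once those are granted the remaining work is the purely formal substitution carried out above.
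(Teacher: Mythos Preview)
Your proposal is correct and follows essentially the same route as the paper: the paper simply invokes Proposition~\ref{prop:3.1.3} to write $p_n$ in terms of the $c_\lambda(X;t)$ and then pushes this identity through the isomorphism $t\mapsto q^{-N}$, $c_r(X;t)\mapsto z_r$ of part~(3) of the Schiffmann--Hubery proposition. If anything, you are more careful than the paper in spelling out why primitivity is preserved under the transport (via the Hopf-algebra nature of the isomorphism and the decomposition $\mathbf{H}_{\hat{A}_N}^{nil}\cong C_N\otimes Z_N$), a point the paper leaves implicit.
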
.

\chapter{Examples}
\section{Examples of primitive elements}
   For the case of $\mathbf{H}_{\hat{A}_0}^{nil}$, we show some calculations of by using our formula. 
\begin{align*}
   \begin{split}
        z_r & = (-q)^{-r}(-1)^{dim End(I_{(r)})}|Aut({I_{(r)}})|[I_{(r)}]\\ &= (-q)^{-r}(-1)^{dim End(I_{(r)})}q^r(1-q^{-1})[I_{(r)}]\\ & = (1-q^{-1})[I_{(r)}]
   \end{split}
\end{align*} So,
\begin{align*}
    p_1 = \frac{1}{1-q^{-1}}z_1 = I_{(1)}
\end{align*}
\begin{align*}
   \begin{split}
      p_2 & = \frac{2}{1-q^{-2}}[(-1)^2{1\choose 0,1}z_2+(-1)^3\frac{1}{2}{2\choose 2,0}z_1^2]\\
             & = \frac{2}{1-q^{-2}}[z_2-\frac{1}{2}z_1^2]\\
             & = \frac{2}{1-q^{-2}}[(1-q^{-1})I_{(2)}-\frac{1}{2}(1-q^{-1})^2I_1^2]\\
             & = \frac{2}{1-q^{-2}}[(1-q^{-1})I_{(2)}-\frac{1}{2}(1-q^{-1})^2((q+1)I_1\oplus I_1+I_2)]\\
             & = I_2+\frac{q^{-1}-1}{q^{-1}}I_1\oplus I_1\\
             & = I_2+(1-q)I_1\oplus I_1\\
  \end{split}
\end{align*} 

\begin{align*}
    \begin{split}
    & p_3 = \frac{3}{1-q^{-3}}\sum_{\lambda\vdash 3}(-1)^{l(\lambda)+1}\frac{1}{l(\lambda)}{l(\lambda)\choose m_1(\lambda),...,m_n(\lambda)}z_1^{m_1(\lambda)}z_2^{m_2(\lambda)}z_3^{m_3(\lambda)}\\
        & = \frac{3}{1-q^{-3}}\Bigg\{(-1)^2\cdot 1\cdot 1z_3+(-1)^3\frac{1}{2}\cdot 2z_1z_2+(-1)^4\frac{1}{3}\cdot 3 z_1^3\Bigg\}\\
        & = \frac{3}{1-q^{-3}}\Bigg\{(1-q^{-1})I_3-(1-q^{-1})^2I_1\cdot I_2+(1-q^{-1})^3I_1^3\Bigg\}\\
        & = \frac{3}{1-q^{-3}}\Bigg\{(1-q^{-1})I_3-(1-q^{-1})^2(qI_2\oplus I_1+I_3)+(1-q^{-1})^3\big\{{2 \choose 1}_+{3 \choose 1}_+I_1^{\oplus 3}\\ & +({2 \choose 1}_++q) I_2\oplus I_1+I_3\big\}\Bigg\} \\
        & = \frac{3}{q^3-1}\Bigg\{I_3+(q-1)^2(q^2-q-1)I_2\oplus I_1+(q-1)^3(q+1)(q^2+q+1)I_1^{\oplus 3}\Bigg\}\\
    \end{split}
\end{align*}

\section{An identity of Hall number}
    In $\mathbf{H}_{\hat{A}_0}^{nil}$, the two formulas $$p_n = \sum_{\lambda\vdash n}(1-q)...(1-q^{l(\lambda)-1})I_{\lambda}$$ and $$p_n=\frac{n}{1-q^{-n}}\sum_{\lambda\vdash n}(-1)^{l(\lambda)+1}\frac{1}{l(\lambda)}{l(\lambda)\choose m_1(\lambda),...,m_n(\lambda)}(1-q^{-1})^{l(\lambda)}I_{(1)}^{m_1(\lambda)}...I_{(n)}^{m_n(\lambda)}$$ represent same elements. Suppose that 
\begin{align*}
    I_{(1)}^{m_1(\lambda)}...I_{(n)}^{m_n(\lambda)} = \sum_{\mu\vdash n}f_{(1)^{m_1(\lambda)},...,(n)^{m_n(\lambda)}}^{\mu}I_{\mu}
\end{align*} where $f_{(1)^{m_1(\lambda)},...,(n)^{m_n(\lambda)}}^{\mu}$ is the corresponding Hall number, then 

\begin{align*}
  \begin{split}
      &\sum_{\lambda\vdash n}(1-q)...(1-q^{l(\lambda)-1})I_{\lambda}\\ 
      &=\frac{n}{1-q^{-n}}\sum_{\lambda\vdash n}(-1)^{l(\lambda)+1}\frac{1}{l(\lambda)}{l(\lambda)\choose m_1(\lambda),...,m_n(\lambda)}(1-q^{-1})^{l(\lambda)}I_{(1)}^{m_1(\lambda)}...I_{(n)}^{m_n(\lambda)}\\
      &=\frac{n}{1-q^{-n}}\sum_{\lambda\vdash n}(-1)^{l(\lambda)+1}\frac{1}{l(\lambda)}{l(\lambda)\choose m_1(\lambda),...,m_n(\lambda)}(1-q^{-1})^{l(\lambda)}\sum_{\mu\vdash n}f_{(1)^{m_1(\lambda)},...,(n)^{m_n(\lambda)}}^{\mu}I_{\mu}\\
      &=\frac{n}{1-q^{-n}}\sum_{\lambda\vdash n}\sum_{\mu\vdash n}(-1)^{l(\lambda)+1}\frac{1}{l(\lambda)}{l(\lambda)\choose m_1(\lambda),...,m_n(\lambda)}(1-q^{-1})^{l(\lambda)}f_{(1)^{m_1(\lambda)},...,(n)^{m_n(\lambda)}}^{\mu}I_{\mu}
  \end{split}
\end{align*}
If we switch $\lambda,\mu$, we get that 
\begin{align*}
    \begin{split}
        &=\frac{n}{1-q^{-n}}\sum_{\mu\vdash n}\sum_{\lambda\vdash n}(-1)^{l(\mu)+1}\frac{1}{l(\mu)}{l(\mu)\choose m_1(\mu),...,m_n(\mu)}(1-q^{-1})^{l(\mu)}f_{(1)^{m_1(\mu)},...,(n)^{m_n(\mu)}}^{\lambda}I_{\lambda}\\
        &=\frac{n}{1-q^{-n}}\sum_{\lambda\vdash n}\sum_{\mu\vdash n}(-1)^{l(\mu)+1}\frac{1}{l(\mu)}{l(\mu)\choose m_1(\mu),...,m_n(\mu)}(1-q^{-1})^{l(\mu)}f_{(1)^{m_1(\mu)},...,(n)^{m_n(\mu)}}^{\lambda}I_{\lambda}
    \end{split}
\end{align*}
So we get the following theorem. 
\begin{theorem} For a fixed $\lambda$, we have the identity for $f_{(1)^{m_1(\mu)},...,(n)^{m_n(\mu)}}^{\lambda}$ that
   \begin{equation*}
        (1-q)...(1-q^{l(\lambda)-1})=\frac{n}{1-q^{-n}}\sum_{\mu\vdash n}(-1)^{l(\mu)+1}\frac{1}{l(\mu)}{l(\mu)\choose m_1(\mu),...,m_n(\mu)}(1-q^{-1})^{l(\mu)}f_{(1)^{m_1(\mu)},...,(n)^{m_n(\mu)}}^{\lambda}
\end{equation*}
\end{theorem}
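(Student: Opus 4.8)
The plan is to read the identity off directly from the two displayed expressions for $p_n$ in $\mathbf{H}_{\hat{A}_0}^{nil}$, using that $\{I_\lambda : \lambda\vdash n\}$ is a basis of the degree-$n$ homogeneous component of $\mathbf{H}_{\hat{A}_0}^{nil}$: it is part of the standard basis $\bigoplus_{[M]}\mathbb{C}[M]$, since the isoclasses of nilpotent representations of the Jordan quiver of total dimension $n$ are indexed exactly by the partitions of $n$ (Jordan types of nilpotent $n\times n$ matrices). Linear independence of this family is precisely what promotes an equality of Hall-algebra elements to a family of scalar identities, one for each $\lambda$.

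Concretely, I would proceed as follows. First, recall from Proposition \ref{prop:5.2.1} together with Macdonald's expansion $p_n(\boldsymbol X)=\sum_{\lambda\vdash n} t^{n(\lambda)}\prod_{i=1}^{l(\lambda)-1}(1-t^{-i})P_\lambda(\boldsymbol X;t)$ that, under $\Phi$, $p_n\mapsto\sum_{\lambda\vdash n}(1-q)\cdots(1-q^{l(\lambda)-1})I_\lambda$; hence the coefficient of $I_\lambda$ in $p_n$ is the left-hand side of the asserted identity. Second, take the second formula for $p_n$ in the statement, which comes from Proposition \ref{prop:3.1.3} (at $t=q^{-1}$) and the substitution $z_r=(1-q^{-1})[I_{(r)}]$ computed in the Examples section, so that $z_\mu=(1-q^{-1})^{l(\mu)}I_{(1)}^{m_1(\mu)}\cdots I_{(n)}^{m_n(\mu)}$ and
\[
p_n=\frac{n}{1-q^{-n}}\sum_{\mu\vdash n}(-1)^{l(\mu)+1}\frac{1}{l(\mu)}\binom{l(\mu)}{m_1(\mu),\dots,m_n(\mu)}(1-q^{-1})^{l(\mu)}\,I_{(1)}^{m_1(\mu)}\cdots I_{(n)}^{m_n(\mu)}.
\]
Third, expand each monomial in the standard basis, $I_{(1)}^{m_1(\mu)}\cdots I_{(n)}^{m_n(\mu)}=\sum_{\lambda\vdash n}f^{\lambda}_{(1)^{m_1(\mu)},\dots,(n)^{m_n(\mu)}}I_\lambda$ by definition of the Hall numbers, substitute, and interchange the two summations so that the outer index is the partition $\lambda$ labelling the basis vector. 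Fourth, equate the coefficient of $I_\lambda$ in the two resulting expressions for $p_n$; this is exactly the claimed identity. One may optionally cross-check it against the explicit values of $p_1,p_2,p_3$ obtained in the Examples section.

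The only real obstacle is bookkeeping: keeping straight which partition is the dummy summation variable and which labels the fixed basis vector when the two formulas are aligned (the relabelling $\lambda\leftrightarrow\mu$), and verifying that the structure constants $f$ appearing in the monomial expansion are indexed in precisely the way the statement demands (in particular that they are the structure constants of the iterated Hall product $I_{(1)}^{m_1}\cdots I_{(n)}^{m_n}$, Euler-form twist included). There is no deeper analytic or combinatorial content: once a single coefficient of $I_\lambda$ is isolated on both sides, the identity drops out immediately from linear independence of the standard basis.
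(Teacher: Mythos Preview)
Your proposal is correct and follows essentially the same argument as the paper: equate the Macdonald expansion of $p_n$ with the expansion obtained from Proposition~\ref{prop:3.1.3} via $z_r=(1-q^{-1})I_{(r)}$, expand each monomial $I_{(1)}^{m_1(\mu)}\cdots I_{(n)}^{m_n(\mu)}$ in the $I_\lambda$-basis using the Hall numbers, swap the roles of $\lambda$ and $\mu$, and compare coefficients. The only point you make more explicit than the paper is the appeal to linear independence of the $I_\lambda$, which is implicit there.
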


\section{Primitive elements for acyclic quiver}

   For a tame acyclic quiver $Q$, tubes of homogeneous regular representation are parametrized by the set $\mathbf{Proj}k[x,y]$. So for $\rho \in \mathbf{Proj}k[x,y]$, the corresponding tubes denoted as $\tau_{\rho}$. Because prime ideals in $k[x,y]$ are $(0)$, $(f(y))$ where $f(y)$ is an irreducible polynomial in $(k[x])[y]$, and $(p,f(y))$ where $p\in k[x]$ is prime, and $f(y)$ is irreducible in $(k[x]/(p))[y]$. For $\mathbf{Proj}k[x,y]$ are homogeneous prime ideals in $k[x,y]$, not containing $k[x,y]_+ = (x,y).$ So only elements in $\mathbf{Proj}k[x,y]$ are $(0)$ and homogeneous cases for $(f(y))$. Then denote deg$\rho$ as the degree of the generator of prime ideals. $\tau_{\rho}$ is equivalent to category of nilpotent representations of Jordan quiver. Consider the Hall algebra $H_{\tau_{\rho}}$ which is isomorphic to the classical Hall algebra, so primitive elements in $H_{\tau_{\rho}}$ are, $$P_n(\rho) = \sum_{\lambda\vdash n}\left(\prod_{j=1}^{l(\lambda)-1}(1-q^{j\text{deg}{\rho}})\right)I_{\lambda}$$ For a fixed $\tau_\rho$, $P_n(\rho)=p_n$ in the Hall algebra of Jordan quiver. But the above elements are not necessarily primitive in Hall algebra $\mathbf{H}_Q$ because an object in $\tau_{\rho}$ can have preprojective subobjects and preinjective quotients. In paper by \cite{Berenstein}, the authors make conjecture 3.3 that the elements \begin{align}\begin{split}
        P_n(\rho)-\frac{1}{N(deg\rho)}\sum_{\rho'\in \mathbb{KP}^1:deg{\rho'}=deg{\rho}}P_n(\rho')\
    \end{split}\end{align} are primitive in $H_Q$, where $N(d)$ is the number of elements of $\mathbb{KP}^1$ of degree d. In paper \cite{deng2015hall}, the authors proved this conjecture to be correct. In Hall algebra of Jordan quiver, we have our formula that, $$p_n = \frac{n}{1-q^{-n}}\sum_{\lambda\vdash n}(-1)^{l(\lambda)+1}\frac{1}{l(\lambda)}{l(\lambda)\choose m_1(\lambda),...,m_n(\lambda)}z_{\lambda}$$ so we can substitute our formula into the formula (6.3.1) for $P_n(\rho)$. For a fixed $d$, $N(d)=\frac{1}{d}\sum_{k|d}\mu(\frac{d}{k})q^k$, where $\mu(r)$ is the M{\"o}bius function.
    So we have that primitive elements have formula that 
    \begin{corollary} For a tame acyclic quiver, elements $p_n$ in the following form are primitive,
    \begin{align}
        \begin{split}
            p_n = \frac{n}{1-q^{-n}}\sum_{\lambda\vdash n}(-1)^{l(\lambda)+1}\frac{1}{l(\lambda)}{l(\lambda)\choose m_1(\lambda),...,m_n(\lambda)}\big(z_{\lambda}(\rho)-\frac{1}{N(deg\rho)}\sum_{\rho'}z_{\lambda}(\rho')\big)
        \end{split}
    \end{align}
    \end{corollary}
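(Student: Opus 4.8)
The plan is to deduce the corollary by combining two results already in hand: (i) the expansion of the primitive element of a classical Hall algebra in the central generators $z_\lambda$, namely Proposition~\ref{prop:3.1.3} together with the Jordan-quiver dictionary of Proposition~\ref{prop:5.2.1}; and (ii) the theorem of \cite{deng2015hall} (settling Conjecture 3.3 of \cite{Berenstein}) that $P_n(\rho)-\frac{1}{N(\deg\rho)}\sum_{\rho'}P_n(\rho')$ is a primitive element of $\mathbf{H}_Q$. The only genuinely new step is a substitution, so once these inputs are set up correctly the argument is short.

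First I would work inside a fixed homogeneous tube $\tau_\rho$. Since $\tau_\rho$ is equivalent to the category of nilpotent representations of the Jordan quiver (over the residue field of $\rho$, of cardinality $q^{\deg\rho}$), its Hall subalgebra $H_{\tau_\rho}$ is isomorphic to the classical Hall algebra; under this isomorphism the central generators are the $z_r(\rho)$ and the primitive element is $P_n(\rho)=\sum_{\lambda\vdash n}\big(\prod_{j=1}^{l(\lambda)-1}(1-q^{j\deg\rho})\big)I_\lambda$, which is the image of $p_n\in\Lambda[t]$ under the map of Proposition~\ref{prop:5.2.1} with $t\mapsto q^{-\deg\rho}$. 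Transporting the identity of Proposition~\ref{prop:3.1.3} along this isomorphism yields
\[
 P_n(\rho)=\frac{n}{1-q^{-n\deg\rho}}\sum_{\lambda\vdash n}(-1)^{l(\lambda)+1}\frac{1}{l(\lambda)}{l(\lambda)\choose m_1(\lambda),\dots,m_n(\lambda)}z_\lambda(\rho),
\]
and the same identity with $\rho$ replaced by any $\rho'$.

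Next I would substitute these expansions into the primitive element of ingredient (ii). Every $\rho'$ occurring in the averaging sum has $\deg\rho'=\deg\rho$, so the scalar $\frac{n}{1-q^{-n\deg\rho}}(-1)^{l(\lambda)+1}\frac{1}{l(\lambda)}{l(\lambda)\choose m_1(\lambda),\dots,m_n(\lambda)}$ attached to the $\lambda$-term is common to $P_n(\rho)$ and to every $P_n(\rho')$; pulling it out of the difference produces exactly the formula displayed in the statement (with $q^{-n}$ replaced by $q^{-n\deg\rho}$, as dictated by working over the residue field of $\rho$). Since the resulting element is literally $P_n(\rho)-\frac{1}{N(\deg\rho)}\sum_{\rho'}P_n(\rho')$, it is primitive in $\mathbf{H}_Q$ by ingredient (ii), which is the assertion; alternatively one may invoke that the primitive elements form a linear subspace, so the combination with the uniform scalar factored out is again primitive.

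The step I expect to be the main obstacle is not conceptual but bookkeeping: one must verify that Macdonald's Hall--Littlewood expansion of $P_n(\rho)$ and the $z_\lambda(\rho)$-expansion coming from Proposition~\ref{prop:3.1.3} describe the \emph{same} element of $H_{\tau_\rho}$ --- that is, that the specialization $t\mapsto q^{-\deg\rho}$ is applied consistently both in the prefactor $\frac{1}{1-t^n}$ and in the structure constants $z_r(\rho)=(1-q^{-\deg\rho})I_{(r;\rho)}$. This is the analogue, over the residue field of $\rho$, of the identity relating the two expansions of $p_n$ established earlier in this chapter; once it is checked, the corollary follows by the substitution described above.
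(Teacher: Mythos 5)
Your proposal is correct and follows essentially the same route as the paper: transport the $z_\lambda$-expansion of Proposition~\ref{prop:3.1.3} into each homogeneous tube $\tau_\rho$ to rewrite $P_n(\rho)$, substitute into the element $P_n(\rho)-\frac{1}{N(\deg\rho)}\sum_{\rho'}P_n(\rho')$ whose primitivity is the theorem of \cite{deng2015hall}, and factor out the common $\lambda$-dependent scalar. You also correctly note the bookkeeping point that the specialization is $t\mapsto q^{-\deg\rho}$, so the prefactor should read $\frac{n}{1-q^{-n\deg\rho}}$ for tubes of degree greater than one, a detail the paper's displayed formula elides.
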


\section{Primitive elements for Hall algebra of cyclic quiver for all case}
   
   Representation of cyclic quiver over finite field has the property that all representations are regular. So we have that 
    \begin{align}
          \mathbf H_{\hat{A}_N} & = \mathbf H_{\hat{A}_N}^{homo}\times \mathbf H_{\hat{A}_N}^{nonhomo}
    \end{align}
    Here $\mathbf H_{\hat{A}_N}^{homo}$ is isomorphic to $\mathbf {H}_{\hat{A}_0}^{nil}$, $\mathbf H_{\hat{A}_N}^{nonhomo}$ is isomorphic to $H_{\hat{A}_N}^{nil}$. So we have (7.7) as 
    \begin{align*}
        \mathbf H_Q & = \mathbf H_Q^{homo}\times \mathbf H_Q^{nonhomo}\\
             & = \mathbf H_Q^{homo}\times Z(Q)\times C(Q)
    \end{align*}
    So the center is $\mathbf H_Q^{homo}\times Z(Q)$ and the primitive elements are $(p_n, p_m,[0])$ where $p_n$ is a primitive element of $\mathbf H_{\hat{A}_0}^{nil}$ and $p_m$ is a primitive element of $\mathbf H_{\hat{A}_N}^{nil}$.
    
\section{Primitive elements for Hall algebra of 2-periodic complexes}

 Bridgeland defined a twisted Hall algebra $\mathbf{DH}(\mathcal{A})$ over $\mathbb{Z}_2-graded$ complexes for any finitary category $\mathcal{A}$ in \cite{Bridgeland}. Bridgeland proves that when $\mathcal{A}$ is finitary, hereditary, there is an isomorphism of algebra $\Phi :$ Drinfeld double $\Tilde{\mathbf{H}}_{\mathcal{A}} \rightarrow \mathbf{DH}(\mathcal{A})$. Later, Yanagida \cite{Yanagida} proves that $\mathbf{DH}(\mathcal{A})$ is actually a bialgebra and $\mathbf{DH}(\mathcal{A})$ is isomorphic to $\Tilde{\mathbf{H}}_{\mathcal{A}}$ as a bialgebra. Then we have for cyclic quiver case, we can use our method to prove primitive elements by maps $\Lambda[t]\rightarrow$ center of $\Tilde{\mathbf{H}}_{\mathcal{A}} \hookrightarrow \mathbf{DH}(\mathcal{A}).$ 
 
 \section{Examples of $z_n$}
 
    For the case of $\mathbf{H}_{\hat{A}_1}^{nil}$, since 
    \begin{align}
        z_n =(-1)^nq^{-2n}\sum_{\begin{aligned}
        &dim [M] = n\delta \\ &soc(M) \text{ is square-free}\\\end{aligned}} (-1)^{dim End(M)}a_M[M].
    \end{align}
    
   In order for $[M]$ to be square-free, $[M] = I_{[1;l_1]}\oplus I_{[0;l_0]}$ with $dim I_{[1;l_1]} = (a_1,a_2)$, $dimI_{[0;l_0]} = (b_1,b_2)$. $[M]$ only exists for \begin{itemize}
           \item[case 1] $a_1 = a_2$\\
               $[M] = I_{[1;2a]}\oplus I_{[0;2(n-a)]}$, $a_M = |Aut(I_{[1;2a]}\oplus I_{[0;2b]})| = (q-1)^2q^{n-2}$\\
           \item[case 2] $|a_1-a_2|=1$\\
                  $[M] = I_{[1;2a-1]}\oplus I_{[0;2(n-a)+1]}$, $a_M = |Aut(I_{[1;2a-1]}\oplus I_{[0;2(n-a)+1]})|=(q-1)^2q^{n-1}$
           
       \end{itemize}
    So \begin{align*}
      \begin{split}
          z_n & = q^{-2n}[\sum_{a=1}^n(-1)^n(q-1)^2q^{n-2}I_{[1;2a]}\oplus I_{[0;2(n-a)]} + \sum_{a=1}^n(-1)^{n+1}(q-1)^2q^{n-1}I_{[1;2a-1]}\oplus I_{[0;2(n-a)+1]}]\\
              & = (-q^{-1})^n(1-q^{-1})^2\sum_{a=1}^n I_{[1;2a]}\oplus I_{[0;2(n-a)]}-qI_{[1;2a-1]}\oplus I_{[0;2(n-a)+1]}
      \end{split}
    \end{align*}
\clearpage

\addcontentsline{toc}{chapter}{Bibliography}

\begin{singlespace}
	\setlength\bibitemsep{\baselineskip}
	\printbibliography

@article{Ringel,
author = "Claus Michael Ringel",
title = "Hall algebra and quantum groups",
year = "1990",
journal = "Inventiones mathematicae",
volume = "101",
pages = "583-592"
}

@article{Green,
author = "James A. Green",
title = "Hall algebras, hereditary algebras and quantum groups",
year = "1995",
journal = "Inventiones mathematicae",
volume = "120",
pages = "361-377"
}

@book{Macdonald,
    author    = "I.G.Macdonald",
    title     = "Symmetric Functions and Hall Polynomials",
    year      = "2015",
    publisher = "Oxford University Press"
}

@misc{Schiffmann,
author={Olivier Schiffmann},
archivePrefix = {arXiv},
year={2009},
title={Lecturs on Hall Algebras},
eprint={math.RT/0611617},
primaryClass={math.RT}
}

@article{Hubery,
author="Andrew Hubery",
title="Ringel-Hall Algebras of Cyclic Quivers",
year = "2010",
journal = "Sao Paulo Journal of Mathematical Sciences 4",
volume = "3",
page = "351–398",
}

@ARTICLE{Schiffmann[2],
  author={Schiffmann, Olivier},
  journal={International Mathematics Research Notices}, 
  title={The Hall algebra of a cyclic quiver and canonical bases of Fock spaces}, 
  year={2000},
  volume={2000},
  number={8},
  pages={413-440},
  doi={10.1155/S1073792800000234}}

@article{Berenstein,
  author={Arkady Berenstein and Jacob Greenstein},
  journal={Pacific Journal of Mathematics},
  title={Primitively Generated Hall Algebras},
  year={2016},
  volume={281},
  number={2},
  pages={287-330},
  doi={10.2140/pjm.2016.281.287}
    }

@article{deng2015hall,
      title={Hall polynomials for tame type}, 
      author={Bangming Deng and Shiquan Ruan},
      year={2015},
      eprint={1512.03504},
      archivePrefix={arXiv},
      primaryClass={math.RT}
}

@article{Xiao,
   author = {J, Xiao},
   title = {Drinfeld double and Ringel-Green theory of Hall algebras},
   journal ={J.Algebra},
   volume={190},
   number={1},
   pages={100-144},
   year={1997}}

@misc{R,
  doi = {https://doi.org/10.48550/arxiv.1812.06033},
  
  url = {https://arxiv.org/abs/1812.06033},
  
  author = {Shimoji, Ryosuke and Yanagida, Shintarou},
  
  keywords = {Quantum Algebra (math.QA), FOS: Mathematics, FOS: Mathematics},
  
  title = {A study of symmetric functions via derived Hall algebra},
  
  publisher = {arXiv},
  
  year = {2018},
  
  copyright = {arXiv.org perpetual, non-exclusive license}
}

@article{Bridgeland, 
  doi = {http://dx.doi.org/10.4007/annals.2013.177.2.9},
  
  author = {Tom Bridgeland},
  
  title = {Quantum groups via Hall algebras of complexes},
  
  journal = {Annals of Mathematics},
  volume = {177},
  pages = {739–759},
  year = {2013}
  
  }

@misc{Yanagida,
  doi = {10.48550/ARXIV.1207.0905},
  
  url = {https://arxiv.org/abs/1207.0905},
  
  author = {Yanagida, Shintarou},
  
  keywords = {Quantum Algebra (math.QA), Representation Theory (math.RT), FOS: Mathematics, FOS: Mathematics},
  
  title = {A note on Bridgeland's Hall algebra of two-periodic complexes},
  
  publisher = {arXiv},
  
  year = {2012},
  
  copyright = {arXiv.org perpetual, non-exclusive license}
}
\end{singlespace}   

\end{document}